\newcommand{\labbel}{\label}
\newcommand{\myarrow}[3]{{#1}\Rightarrow^{{#2}}{#3}}
\newcommand{\myarrownot}[3]{{#1}\not\Rightarrow^{{#2}}{#3}}
\newtheorem{theorem}{Theorem}[section]
\newtheorem{lemma}[theorem]{Lemma}
\newtheorem{thm}[theorem]{Theorem}
\newtheorem{proposition}[theorem]{Proposition} 
\newtheorem{corollary}[theorem]{Corollary} 
\newtheorem{fact}[theorem]{Fact} 
\newtheorem*{corollary*}{Corollary}
\theoremstyle{definition}
\newtheorem{definition}[theorem]{Definition}
\theoremstyle{remark}
\newtheorem{remarks}[theorem]{Remarks}
\newcommand{\brfrt}{\hspace{0 pt}}
\DeclareMathOperator{\cf}{cf}
\DeclareMathOperator{\CAP}{CAP}
\newcommand{\m}{\mathfrak}
\begin{document}

\title{Weak and local versions of measurability}

\author{Paolo Lipparini} 
\revauthor{Lipparini, Paolo} 
\address{Dipartimento di Matematica\\ Viale della Ricarica
 Scientifica \\II Universit\`a di Roma (Tor Vergata)\\I-00133 ROME ITALY}
\urladdr{http://www.mat.uniroma2.it/\textasciitilde lipparin}

\keywords{$\mu $-complete, $\lambda$-decomposable, $(\mu, \nu)$-regular ultrafilter; measurable, weakly compact, weakly measurable cardinal; $\lambda$-nonstandard element; compactness of products of topological spaces; infinitary language}

\subjclass[2010]{03C20,  03C75, 03E05, 03E55, 03E75, 03H99, 06E10, 54B10, 54D20; 03C55, 03C95, 54A20}
\thanks{Work performed under the auspices of G.N.S.A.G.A}

\begin{abstract}
Local versions of measurability have been around for a long time.
Roughly, one splits the notion of $\mu $-completeness into pieces, and asks
for a uniform ultrafilter over $\mu $ satisfying just some piece of
$\mu $-completeness. 

Analogue local versions of weak compactness are harder to come by, since
weak compactness cannot be defined by using a single ultrafilter.
We deal with the problem by restricting just to a subset $P$ of 
all the partitions of $\mu $ into
$<\mu $ classes and asking for some ultrafilter $D$ over $\mu $ such that 
no partition in $P$ 
disproves the $\mu $-completeness of $D$.
By making $P$ vary in appropriate classes,
one gets both  measurability and 
weak compactness, as well as possible intermediate notions
of ``weak measurability''.

We systematize the above procedures and combine them
to obtain variants of measurability which are at the same time weaker and local.
Of particular interest is the fact that the notions thus obtained
admit equivalent formulations through  topological, model theoretical, combinatorial
and Boolean algebraic conditions. We also hint a connection
with Kat{\v{e}}tov order on filters.
\end{abstract} 
 
\maketitle

\section{Introduction} \labbel{intro} 

\subsection{Local forms of measurability} \labbel{locsub} 
Local versions of measurability have been considered by many authors, 
among them Chang \cite{cha67}, Prikry \cite{pri73}, Silver \cite{sil74}, 
just to state some.

To cast our introduction into a general framework, let us recall some definitions.
If $D$ is an ultrafilter over $\mu$, then $D$ is \emph{$ \lambda $-decomposable}
if there is a partition
of $\mu $ into $\lambda$ many classes in such a way that
no union of $<\lambda$  classes belongs to $D$.
Equivalently (\ref{rmkmeas}(1)), $D$ is $ \lambda $-decomposable
if  there is a function $f: \mu   \to \lambda $ such that $f(D)$ is \emph{uniform over $ \lambda  $},
that is, every member of $f(D)$ has cardinality $ \lambda $.
Here $f(D)$ is the ultrafilter over $ \lambda $ defined by
$ Y \in f(D) $ if and only if  $f ^{-1}(Y) \in D $.
The relation induced by this  ``quotient'' operation is usually
 called the \emph{Rudin-Keisler (pre-)order}, thus
an ultrafilter $D$ is $\lambda$-decomposable if and only if 
there is some ultrafilter uniform over $\lambda$ and $\leq D$ 
modulo the Rudin-Keisler order.  
Each of the above points of view---through quotients and through 
partitions---has its own advantages, as we shall discuss
in details.
We say that $D$ is \emph{$ \lambda $-indecomposable}
if it is not $ \lambda $-decomposable.

If $\mu $ is measurable, then trivially there is a uniform ultrafilter $D$ 
over $\mu $ which is $ \lambda $-indecomposable for every $\lambda< \mu$. 
Chang \cite{cha67} first noticed that the study of $\lambda$-decomposability
is not trivial, even for small cardinals. Actually, Chang dealt with
the notion of \emph{$\lambda$-descending incompleteness}, which is nevertheless
equivalent to $\lambda$-decomposability, for $\lambda$ regular. See
\cite{mru} for this and many other equivalences, and for an exhaustive list of references
to the subject, which includes works by  the mentioned authors and by 
 A. Adler,
 A. W. Apter, 
 S. Ben-David,
M. Benda, 
G. V. {\v{C}}udnovski{\u\i},
 D. V. {\v{C}}udnovski{\u\i}, 
H.-D. Donder, 
M. Foreman, 
 J. M. Henle, 
 M. Huberich, 
T. Jech,  
R. B. Jensen,  
M. Jorgensen,
A. Kanamori, 
J. Keisler, 
J. Ketonen, 
 B. J. Koppelberg,
K. Kunen,
M. Magidor, 
M. Sheard, 
A. D. Taylor and
W. H. Woodin.

In fact, an uncountable cardinal $ \mu $ is measurable
\emph{if and only if}  it carries a uniform ultrafilter which is 
$\lambda$-indecomposable for every $\lambda< \mu$.  
More generally, an ultrafilter 
is \emph{$\mu $-complete}
if and only if it is
 $\lambda$-indecomposable for every $\lambda< \mu$. 
 See Remark \ref{rmkmeas}(2) here, or  \cite{mru} for further details. 
Thus
ultrafilters enjoying  various degrees of indecomposability 
 furnish weaker ``local'' analogues  of measurability,
The existence 
of a $ \lambda $-indecomposable  uniform ultrafilter over $\mu $
admits many equivalent formulations in terms of
topology, model-theory (both first order and extended), infinite combinatorics,
Boolean algebras.
 See \cite{bumi,arch,nuotop,cpnw,cpnwb} for examples. 
In a sense, the situation is similar
to the one described in the classical paper \cite{KT}
by H. J. Keisler and A Tarski, where a big deal of conditions
equivalent to a cardinal being measurable have been worked out.
The existence 
of a $ \lambda $-indecomposable uniform ultrafilter over $\mu $,
as a local version of the measurability of $\mu $, and just as
measurability itself, can be expressed in equivalent forms 
in a varied sets of mathematical frameworks.
Indeed, 
the mentioned topological, model theoretical, etc., characterizations
of the existence of a  $ \lambda $-indecomposable uniform ultrafilter over $\mu $ can be easily generalized in order to 
provide, 
for every set $\Lambda$ of cardinals, 
characterizations of the existence
of a uniform ultrafilter over $\mu $ which for no $ \lambda \in \Lambda$ 
is $\lambda$-decomposable.
When $\Lambda$  is  the set of all cardinals
$< \mu $ 
one usually
recovers exactly the conditions from \cite{KT}.

\subsection{Weak compactness with and without inaccessibility} \labbel{wcwwi}

The mentioned paper by Keisler and Tarski 
contains also many topological, model-theoretical, etc.,
equivalent formulations of
all the large cardinals known at that time,
in particular, also of
 weak compactness. It is not apparent how to get
``local'' versions of such results in a sense parallel to the above 
local versions of measurability.
In this case, the situation is made even more difficult by the fact
that there are many definitions of weak compactness 
which turn out to be equivalent only under the assumption
that the cardinals at hand are inaccessible
(by \emph{inaccessible} we shall always mean, as nowadays usual,
\emph{strongly inaccessible}).
Though conventional wisdom has  suggested 
that inaccessibility should be included right in the definition 
of weak compactness, we believe that in such a way
the richness of many interesting phenomenons gets lost.

We refer, for example, to earlier 
topological and measure-theoretical
studies by
Mr{\'o}wka \cite{mro66,mro70},
followed by {\v{C}}udnovski{\u\i} \cite{cud72}, 
or to a homological theorem 
by A. Mekler  mentioned in Eklof \cite[Theorem 1.6]{ekl77}. 
All these results are partially trivialized by the assumption of
inaccessibility.
Also
the \emph{tree property}  
is  equivalent to weak compactness 
only under the assumption of inaccessibility,
and even  a successor cardinal may satisfy 
it, Mitchell \cite{Mit73}. 
Variants of the tree property
have received a notable attention
in recent years. See, e.~g., Fontanella \cite{Fon13},
 Viale and Wei{\ss} \cite{VW11}, 
 Wei{\ss} \cite{Wei12} and further references there.
However the properties we are considering here
differ from the tree property and from its variants in that
they do imply weak inaccessibility.
That strong inaccessibility is
effectively necessary for the
equivalence of most variants of
weak compactness  has been verified by Boos \cite{Boo76},
who constructed models in which many such variants
do not imply (strong) inaccessibility.
At the same time, in \cite[Theorem 2.4]{Boo76} Boos  also  finds 
further equivalent conditions for 
weak compactness without inaccessibility.

Subsequently in \cite{bumi} we showed that weak compactness 
without inaccessibility deeply affects  the study of extended 
logics. In the present note  logics 
will enter the scene mostly as examples.
Roughly, the reader might think of a \emph{logic} 
as an extension of first order logic which satisfies all
the properties common both 
 to infinitary logics and to logics with added quantifiers,
e.~g., the quantifier $Q_ \alpha $ saying ``there are $\aleph_ \alpha$ many''.
An extensive review of the subject can be found in \cite{BF};
a concise survey of what happened in the last years can be found in \cite{she12}.
Exactly in the same way as weakly compact cardinals can be defined
as  cardinals for which the corresponding infinitary logic satisfies compactness,
we showed in \cite{bumi} that, for \emph{every} logic $\mathcal N$,
the first cardinal $\kappa$ such that $\mathcal N$ is $( \kappa , \kappa )$-compact
 is weakly compact in the weaker sense that 
$\mathcal L _{ \kappa , \omega } $ is
 $( \kappa , \kappa )$-compact. In case 
$\kappa > \omega $ is inaccessible, this is one
among the many possible definitions of 
weak compactness.
Thus a cardinal $\kappa$ is weakly compact (in the above weaker sense, and including $ \omega$) if and only if it is the first cardinal
such that some logic is  $( \kappa , \kappa )$-compact.
Recall that  $\mathcal L _{ \kappa , \nu } $ is like first order logic, except that
conjunctions and disjunctions of $<\kappa$ sentences  are allowed,
as well as simultaneous universal or existential quantification
over sets of $<\nu$ variables.
A logic is \emph{$( \kappa , \kappa )$-compact}
if every $\kappa$-satisfiable set $ \{ \sigma_ \alpha \mid  \alpha \in \kappa \}  $
  of $\kappa$  sentences is
satisfiable.
A set is \emph{$\kappa$-satisfiable} 
if every subset of cardinality $<\kappa$ has a model.  
The results from \cite{bumi} not only support the conviction
that weak compactness without inaccessibility is interesting and deserves further study,
but also show that, say, the  Betelgeusians would 
have arrived at the very same notions of weak and strong compactness
as ours, even had they started by considering
 entirely different logics.
E~g., the first  weakly compact 
(in the above weaker sense) cardinal $\kappa$ is
the first cardinal such that $\mathcal L(Q_0) $
is $( \kappa , \kappa )$-compact.

\subsection{Local versions of weak compactness} \labbel{locwc} 

As we mentioned, 
it is not apparent how to introduce local versions of weak compactness 
analogue to the local versions of measurability
described in Subsection \ref{locsub}, since
weak compactness cannot be defined by using a single ultrafilter.
We originally devised a model-theoretical approach to the problem in \cite{bumi},
again motivated by extended logics.
However, subsequently we found an ostensibly  simpler method 
 which uses the following observation,
bearing some similarity with Mr{\'o}wka ideas from \cite{mro66,mro70}.
Clearly, an \emph{ultrafilter} $D$ over $\mu $ 
is $\mu $-complete if and only if 
whenever we partition  $\mu $ into $<\mu $ classes,
one of these classes is in $D$. 
Hence checking $\mu $-completeness of some ultrafilter over $\mu $ 
amounts to check it for all the $2^\mu $ partitions of $\mu $ into
$<\mu $ classes. We can pick a subset $P$ of all such partitions
and only ask that 
there is some ultrafilter $D$ such that 
none of the above partitions is a witness for the $\mu $-incompleteness
of $D$
(the $\mu $-incompleteness of $D$ might be or might be not disproved  by some  partition outside  P). Taking $P$ to be the set of all the above partitions 
 gives back the notion of measurability,
while making $P$ vary among sets of cardinality
$\mu $ gives an equivalent formulation of weak compactness.
If GCH fails at $\mu $, 
we can consider all $P$'s of cardinality $\nu$,
for $\nu$ strictly between
 $\mu $ and $  2^ \mu $.
This provides weak versions of measurability and
Schanker \cite{sch11}, using
an equivalent formulation,  recently proved that there are models in 
which such intermediate notions are actually
distinct both from measurability and from weak compactness.
 See also 
\cite{CGHS}.

Pursuing further the idea that $\mu $-completeness is ``composed'' of
pieces of $\lambda$-indecomposability for $\lambda < \mu$,
as described in Subsection \ref{locsub}, 
and using the observations in the above paragraph,
we can introduce a form of ``local weak compactness at $\lambda$''
by asking that $\kappa$ many partitions of $\mu $ are not enough
to witness that every uniform ultrafilter over $\mu $ is $\lambda$-decomposable.
Let us denoted by $ \myarrownot {\mu}{\kappa}{\lambda} $
the above statement, intended to mean that \emph{it is not the case}  that
``uniformity on $\mu $ implies (by means of $\kappa$ many partitions) 
$\lambda$-decomposability''.
Correspondingly, the above statement between quotes
will be denoted by $ \myarrow {\mu}{\kappa}{\lambda} $. 
Notice that it might happen  that there is indeed 
a $\lambda$-indecomposable
 uniform ultrafilter over $\mu $ 
(a local version of measurability at $\lambda$),
in which case $ \myarrownot {\mu}{\kappa}{\lambda} $ for every $\kappa$.
On the other hand, it is possible that 
every uniform ultrafilter over $\mu $ is $\lambda$-decomposable
(``local measurability''  at $\lambda$ fails)
but perhaps all of the $2^\mu $ partitions of $\mu $ 
into $\lambda$ pieces are necessary to witness this (or, at least, $2^\mu $ 
many partitions are needed). 
In this case 
$ \myarrow {\mu}{ 2^ \mu }{\lambda} $,
but
$ \myarrownot {\mu}{\kappa}{\lambda} $ for every 
$\kappa < 2^\mu$. We also may have intermediate cases.
 We interpret 
$ \myarrownot {\mu}{\mu}{\lambda} $ 
 as a local version
of weak compactness at $\lambda$,
while if $\kappa$ grows larger in
$ \myarrow {\mu}{\kappa}{\lambda} $
we go closer and closer to (local versions) of measurability.
We sometimes find it convenient to work with functions
rather than with partitions (cf. the two equivalent definitions of
$\lambda$-decomposability given at the beginning).
The definition of 
$ \myarrow {\mu}{\kappa}{\lambda} $ 
by means of functions
is given in Definition \ref{def}; 
in Lemma \ref{partitions2}(2) it is proved 
 equivalent to the 
definition in terms of partitions sketched above.

In the above discussion 
we can consider a set $ \Lambda$ 
of cardinals in place of $\lambda$,
and 
introduce a similar principle 
$ \myarrow {\mu}{\kappa}{\Lambda} $ 
which says  that $\lambda$-\brfrt decomposability is
witnessed for at least one $ \lambda \in \Lambda$
(Definition \ref{defL}).
When we take $\Lambda = Card _{< \mu} $,
the set of all infinite cardinals $<\mu $,
then $ \myarrownot {\mu}{2 ^ \mu}{\Lambda} $
corresponds exactly to the measurability of $\mu $, 
while
$ \myarrownot {\mu}{\mu}{\Lambda} $ 
 corresponds  to weak compactness of $\mu $ 
(provided we  assume  either that $\mu $  is inaccessible, or 
that we are dealing with weaker versions of weak compactness,
as described in the previous subsection).
The situation is represented  in the following  table,
where we write 
$C _{< \lambda } $ in place of  $Card _{< \lambda } $ 
to save space.
\begin{gather*}      \labbel{table1}
\textsc{  Table 1  }
\\
\text{($\mu \stackrel{ \kappa  }{\not\Rightarrow} \Lambda $, measurability and weak compactness)}
\\ 
\textsc{\small Local versions\ \ \ \ 
\parbox[c][2em][c]{0.3\textwidth}{\centering 
\footnotesize (going more and\\ more global)}\ \ \ \ \ 
\small Global versions}
\\
\begin{CD} 
\parbox[c][2em][c]{0.25\textwidth}{\centering \footnotesize (a $\lambda$-indecomp.     \\ ultrafilter over $\mu $)}
@.
\mu \stackrel{ 2^\mu }{\not\Rightarrow} \lambda
@>
{\text{$\Lambda$ getting larger}}
>
\parbox[c][0.5em][c]{0.25\textwidth}{\centering \scriptsize (going closer to meas.)}
>
 \mu \stackrel{ 2^\mu }{\not\Rightarrow} C_{< \mu}  
@.
 \text{\centering \footnotesize ($\mu $  measurable)}
\\
@. 
@A
\parbox[c][2em][c]{0.23\textwidth}{\centering \scriptsize 
$\kappa$ increasing\\
(going closer to local \\ 
measurability at $\lambda$)}
A
A   
@A
\parbox[c][2em][c]{0.18\textwidth}{\centering \scriptsize (going closer to measurability)}
A
A
 \text{\centering \footnotesize ($\mu $ weakly meas.)}
\\
\parbox[c][4em][c]{0.25\textwidth}{\centering \footnotesize ($\mu $ locally weakly compact at $\lambda$)} 
@.
\mu \stackrel{ \mu }{\not\Rightarrow} \lambda
@>
>
\parbox[c][1em][c]{0.25\textwidth}{\centering \scriptsize (going closer to \\  weak compactness)}
>
 \mu \stackrel{ \mu }{\not\Rightarrow}C _{< \mu} 
@. 
\parbox[c][4em][c]{0.23\textwidth}{\centering \footnotesize ($\mu $  weakly compact
\\ if inaccessible)}
\end{CD}   
\end{gather*}
\subsection{Topological and model-theoretical  equivalences} \labbel{locwc2} 

There are pleasant topological characterizations 
of the relations 
$ \myarrow {\mu}{\kappa}{\lambda} $ 
and 
$ \myarrow {\mu}{\kappa}{\Lambda} $.
A topological space  $X$ is \emph{$\mu $-compact}
if every subset of cardinality $\mu $ has a complete accumulation point.
Clearly, if $|X| < \mu$ then $X$ is vacuously $\mu $-compact.
On the other hand, if $X$ is a product of cardinals and
$|X| \geq \mu $, then $\mu $-compactness of $X$
depends on the relations introduced above. 

For example, if $\lambda$ is regular and
$|\lambda^ \kappa | \geq \mu $ then 
$ \myarrownot {\mu}{\kappa}{\lambda} $  
if and only if 
$\lambda^ \kappa $
is  
$\mu$-compact.
Here $\lambda$ is considered as a topological space 
endowed with the \emph{order topology},
 a base of which consists of all open intervals,
including intervals of the form $[0, \alpha )$.  
Powers and products are always endowed with the
Tychonoff topology, the coarser topology which
makes the projections continuous.

More generally, under similar cardinality and regularity assumptions,
$ \myarrownot {\mu}{\kappa}{\Lambda} $  
if and only if 
$ \prod _{ \lambda \in \Lambda } \lambda^ \kappa $
is  
$\mu$-compact.
We can draw again a diagram.
\begin{gather*}      \labbel{table2}
\textsc{  Table 2  }
\\
\text{$\mu \stackrel{ \kappa  }{\not\Rightarrow} \Lambda $ and products of topological spaces}
\\ 
\text{\small (all $\lambda$'s  regular, $\kappa \geq \omega $ and all spaces of cardinality $\geq \mu $)}
\\
\textsc{ 
\parbox[c][2.3em][c]{0.27\textwidth}
{\centering  \small Powers of  \\ one factor}
\parbox[c][2.3em][c]{0.27\textwidth}{\centering 
\footnotesize (more choices \\ for the factors)}
\parbox[c][2.3em][c]{0.27\textwidth}
{\centering  \small \ \ Products of  \\ \ \ many factors}
}
\\
\begin{CD} 
\parbox[c][2em][c]{0.25\textwidth}{\centering \footnotesize (all powers of $\lambda$ \\ $\mu $-compact)}
@.
\mu \stackrel{ 2^\mu }{\not\Rightarrow} \lambda
@>
{\text{$\Lambda$ getting larger}}
>
>
 \mu \stackrel{ 2^\mu }{\not\Rightarrow} \Lambda   
@.
\parbox[c][2em][c]{0.25\textwidth}{\centering \footnotesize (all products of members
 of $\Lambda$ $\mu $-compact)}
\\
@. 
@A
\parbox[c][2em][c]{0.23\textwidth}{\centering \scriptsize 
$\kappa$ increasing}
A
A   
@A
A
A
\\
\parbox[c][2em][c]{0.25\textwidth}{\centering \footnotesize ($\lambda^ \kappa $ $\mu $-compact)} 
@.
\mu \stackrel{ \kappa }{\not\Rightarrow} \lambda
@>
>
>
 \mu \stackrel{ \kappa  }{\not\Rightarrow} \Lambda  
@. 
\parbox[c][2em][c]{0.25\textwidth}{\centering \footnotesize ($ \prod _{ \lambda \in \Lambda} \lambda ^ \kappa $ $\mu $-compact)}
\end{CD}   
\end{gather*}

Model theoretical equivalents are dealt with in Section \ref{mt}.

\subsection{Note} \labbel{note} 

This is a preliminary version. More results and observations are planned
to be added in the future. Results related to the present work have been presented, 
proved or announced in the previously quoted papers and, e.~g., in
\cite{pamsuf,tapp,tproc2},
sometimes in equivalent formulations, or with slightly different notations.

\section{Basic definitions} \labbel{bas} 

Our notation is standard and,
if not mentioned otherwise, follows \cite{J}.
Throughout, $\alpha$, $\beta$, $\gamma$ are ordinals, $\lambda$, $\mu $, $\nu$, $\xi$ are infinite cardinals,
 $\kappa$ and $\theta$ are nonzero cardinals, $J$ is a nonempty set and
$\Lambda$, $\Upsilon $ are nonempty sets of infinite cardinals.
When there is no risk of ambiguity we shall write, say,
$\lambda \geq \mu ,\nu$ as a shorthand for 
$\lambda \geq \sup \{ \mu ,\nu \} $. 
$Card$ and
$Reg$ denote, respectively, the class of all infinite cardinals and of  all infinite regular cardinals.
$Card[ \lambda ,\mu] $ is $\{ \nu \mid \lambda \leq \nu \leq \mu \}$, and
$Card(\lambda ,\mu) $, $Card[ \lambda ,\mu) $ have a similar meaning.
We write $Card[ \lambda ,\mu] $ in place of simply writing $[ \lambda ,\mu] $
both 
in order to avoid possible confusion with other notions (e.~g., compactness of topological spaces or of logics) and to make clear that members
of $Card[ \lambda ,\mu] $ are cardinals, rather than, say, ordinals.
$Reg[ \lambda ,\mu] = Card[ \lambda ,\mu] \cap Reg$, and similarly for
$Reg[ \lambda ,\mu)$.
We sometimes write $Card _{< \mu}  $ in place of $Card[ \omega ,\mu) $ and
$Reg _{< \mu}  $ in place of $Reg[ \omega ,\mu)$. 

\begin{definition} \labbel{def}    
We denote by
$\mu \Rightarrow ( \lambda _ j ) _{ j \in J } $ 
the following statement.
 \begin{enumerate}   
\item[(*)]
There is a sequence  
$(f_ j ) _{ j \in J }$
of functions 
$f_ j  : \mu  \to \lambda _ j  $
for $j \in J$, such that 
for every  uniform ultrafilter $D$ over $ \mu $ 
there is $j \in J$ such that 
$f_ j (D)$ is uniform over $ \lambda _ j$.    
 \end{enumerate}

In case (*) holds for some given sequence of functions
$(f_ j ) _{ j \in J }$ we shall say that the $f_ j $'s
\emph{witness} $\mu \Rightarrow ( \lambda _ j ) _{ j \in J } $.

We write 
$ \myarrow {\mu}{\kappa}{\lambda} $ 
when $|J |= \kappa $ and all the $ \lambda _ j $'s in (*) are equal to $ \lambda $.

The negations of
the above principles shall be denoted 
by 
$\mu {\not\Rightarrow} ( \lambda _ j ) _{ j \in J } $ and  
$ \myarrownot {\mu}{\kappa}{\lambda} $ 
respectively.
\end{definition}

Trivial facts about 
$\mu \Rightarrow ( \lambda _ j ) _{ j \in J } $ 
are that the notion is invariant under a permutation
of the indices, and that 
it is preserved by taking supersequences.
By this we mean that if 
$J' \subseteq J$ and the  
subsequence 
$(f_ j ) _{ j \in J' }$
witnesses $\mu \Rightarrow ( \lambda _ j ) _{ j \in J' } $
then the sequence $(f_ j ) _{ j \in J}$
witnesses $\mu \Rightarrow ( \lambda _ j ) _{ j \in J } $.

Moreover, notice that if  $|f_j(\mu)|< \lambda _j$ 
then $f_j(D) $ is not uniform over 
$ \lambda _j$;
in particular, 
if $ \mu < \lambda_j $ then 
$f_j(D) $ is not uniform over 
$ \lambda _j$.
From this  we get  the following facts.

\begin{fact} \labbel{fact0}
(1) The sequence
$(f_ j ) _{ j \in J }$
witnesses $\mu \Rightarrow ( \lambda _ j ) _{ j \in J } $
if and only if the subsequence 
$(f_ j ) _{ j \in J' }$
witnesses $\mu \Rightarrow ( \lambda _ j ) _{ j \in J '} $,
where
$J'= \{ j \in J \mid |f_j(\mu)|= \lambda _j \} $. 

(2) In particular, $\mu \Rightarrow ( \lambda _ j ) _{ j \in J } $ 
if and only if 
$\mu \Rightarrow ( \lambda _ j ) _{ j \in J'' } $, where
 $J'' = \{ j \in J \mid  \mu \geq \lambda _j  \} $. 
 \end{fact}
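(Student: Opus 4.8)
The plan is to establish both parts of Fact \ref{fact0} from the single observation already made in the paragraph preceding the statement: namely that if $|f_j(\mu)| < \lambda_j$ then $f_j(D)$ fails to be uniform over $\lambda_j$ for \emph{every} ultrafilter $D$ over $\mu$. The key point is that in Definition \ref{def}(*), a given index $j$ contributes to the disjunction ``there is $j \in J$ such that $f_j(D)$ is uniform over $\lambda_j$'' only if it can \emph{ever} contribute, i.e.\ only if some ultrafilter $D$ makes $f_j(D)$ uniform over $\lambda_j$. If $|f_j(\mu)| < \lambda_j$, no such $D$ exists, so that index is ``dead weight'' and can be discarded without affecting which uniform ultrafilters $D$ are handled by the witnessing sequence.

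For part (1), I would argue both implications. The forward direction follows from the ``preserved by taking supersequences'' remark already recorded in the text (applied in reverse: passing from the full sequence to the subsequence is what needs justification, so actually this is the nontrivial direction). So suppose $(f_j)_{j \in J}$ witnesses $\mu \Rightarrow (\lambda_j)_{j \in J}$; fix a uniform ultrafilter $D$ over $\mu$. By hypothesis there is $j \in J$ with $f_j(D)$ uniform over $\lambda_j$. By the displayed observation, for this $j$ we must have $|f_j(\mu)| = \lambda_j$ (since $f_j(\mu) \in f_j(D)$ always, and a uniform member has size $\lambda_j$, forcing $|f_j(\mu)| \geq \lambda_j$; and $|f_j(\mu)| \leq \lambda_j$ trivially), so $j \in J'$. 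Hence the subsequence $(f_j)_{j \in J'}$ already witnesses the principle. The converse direction is exactly the supersequence remark, since $J' \subseteq J$.

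For part (2), observe that $J' \subseteq J''$: if $|f_j(\mu)| = \lambda_j$ then certainly $\mu \geq |f_j(\mu)| = \lambda_j$, so any $j \in J'$ lies in $J''$. Now apply part (1) twice. Starting from $J$: $\mu \Rightarrow (\lambda_j)_{j \in J}$ iff $\mu \Rightarrow (\lambda_j)_{j \in J'}$. Next I note that applying part (1) to the index set $J''$ in place of $J$ gives $\mu \Rightarrow (\lambda_j)_{j \in J''}$ iff $\mu \Rightarrow (\lambda_j)_{j \in J'''}$ where $J''' = \{ j \in J'' \mid |f_j(\mu)| = \lambda_j \}$; but $J''' = J' \cap J'' = J'$ since $J' \subseteq J''$. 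Chaining the two equivalences through the common middle term $\mu \Rightarrow (\lambda_j)_{j \in J'}$ yields $\mu \Rightarrow (\lambda_j)_{j \in J}$ iff $\mu \Rightarrow (\lambda_j)_{j \in J''}$, as desired.

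I do not expect any genuine obstacle here; the statement is essentially a bookkeeping lemma and the whole content is the displayed observation about $|f_j(\mu)| < \lambda_j$. The only point requiring a moment's care is making sure that in part (2) one does not simply invoke part (1) with $J''$ directly and then have to re-identify the resulting reduced set --- the clean route is the one above, reducing both $J$ and $J''$ to the \emph{same} canonical set $J'$ and composing the two biconditionals. It is also worth stating explicitly that $f_j(\mu) \in f_j(D)$ for every ultrafilter $D$ (as $f_j^{-1}(f_j(\mu)) = \mu \in D$), since this is what forces $|f_j(\mu)| \geq \lambda_j$ whenever $f_j(D)$ is uniform.
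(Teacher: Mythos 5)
Your argument is correct and follows exactly the route the paper intends: the Fact is stated as an immediate consequence of the displayed observation that $|f_j(\mu)|<\lambda_j$ forces $f_j(D)$ to be non-uniform, combined with the supersequence remark, which is precisely what you spell out (including the useful explicit point that $f_j(\mu)\in f_j(D)$, so a uniform image forces $|f_j(\mu)|=\lambda_j$). The only step left tacit in your part (2) is that the backward direction also needs the witness on $J''$ to be extended to all of $J$ by arbitrary (e.g.\ constant) functions before the supersequence remark applies, but this is trivial and does not affect correctness.
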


In view of Fact \ref{fact0}(2)
  it is no loss of generality if
in Definition \ref{def} 
we assume that
$ \mu \geq\lambda_j $,  for every $j \in J$.
On the other hand, if some $\lambda_j$ equals $\mu $, 
then 
$\mu \Rightarrow ( \lambda _ j ) _{ j \in J } $
is trivially true, as witnessed by 
taking  $f_j$  to be the identity function, or just any injective function.
In conclusion, 
the principle 
$\mu \Rightarrow ( \lambda _ j ) _{ j \in J } $
is interesting only when 
$ \mu > \lambda_j $,  for every $j \in J$.

We are soon going to show that
a cardinal is measurable if and only if 
$\mu {\not\Rightarrow} ( \lambda _ j ) _{ j \in J } $,
where each cardinal $ \lambda  < \mu $
appears $2^\mu $  times in the sequence 
$( \lambda _ j ) _{ j \in J }$. 
The above observation is better proved
after (and justifies) the introduction of  some further 
more compact notation.

\begin{definition} \labbel{defL}    
If $\Lambda$ is a set of infinite cardinals,
we write
$ \myarrow {\mu}{\kappa}{\Lambda} $ 
if 
$\mu \Rightarrow ( \lambda _ j ) _{ j \in J } $ 
 holds in case  each $\lambda \in \Lambda $ 
appears exactly $\kappa$ times in the sequence 
$( \lambda _ j ) _{ j \in J } $.

More formally,  
if $ \Lambda =\{ \lambda _h \mid h \in H\}$ then
$ \myarrow {\mu}{\kappa}{\Lambda} $ 
means 
$\mu \Rightarrow ( \lambda _ j ) _{ j \in J } $,
where $J= H \times \kappa $ and   
$\lambda _j = \lambda _ h $,
whenever $j=(h, \gamma) $, $h \in H$, $\gamma \in \kappa $.  

Notice that
$ \myarrow {\mu}{\kappa}{ \{  \lambda\}} $ 
is the same as 
$ \myarrow {\mu}{\kappa}{\lambda} $.

By convention, 
$ \myarrow {\mu}{\kappa}{\emptyset} $ 
is always considered to be false (this will occur  
infrequently and only as the basis of some induction). 
\end{definition}

Trivially, as above, if
$ \myarrow {\mu}{\kappa}{\Lambda} $, 
$ \theta  \geq \kappa $ and $\Upsilon \supseteq \Lambda $ then
$ \myarrow {\mu}{\theta}{\Upsilon} $. 

\begin{fact} \labbel{fact1}
The following conditions are equivalent.
 \begin{enumerate}[(1)]   
\item 
$ \myarrow {\mu}{2 ^ \mu }{\Lambda} $.
\item
$ \myarrow {\mu}{\kappa}{\Lambda} $, for some 
(equivalently, all) $\kappa \geq 2^ \mu$.
\item
Every uniform ultrafilter over $\mu $ is
$\lambda$-decomposable, for some $\lambda \in \Lambda $.    
\item
Every  $\mu $-decomposable ultrafilter (over any set) is
$\lambda$-decomposable, for some $\lambda \in \Lambda $.     
 \end{enumerate} 
 \end{fact}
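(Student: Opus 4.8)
The plan is to establish $(1)\Leftrightarrow(2)\Leftrightarrow(3)$ as a three-term cycle, together with $(3)\Leftrightarrow(4)$ separately. Two standard ingredients will be used throughout: the (general form of the) equivalence in Remark \ref{rmkmeas}(1), that an ultrafilter $E$ over a set $I$ is $\nu$-decomposable if and only if there is a function $g\colon I\to\nu$ with $g(E)$ uniform over $\nu$; and the elementary identity $|{}^{\mu}\lambda|=\lambda^{\mu}=2^{\mu}$, valid for every infinite $\lambda\le\mu$ (since $2^{\mu}\le\lambda^{\mu}\le\mu^{\mu}=2^{\mu}$). By Fact \ref{fact0}(2) we lose nothing in assuming that each $\lambda\in\Lambda$ satisfies $\lambda\le\mu$, so this identity applies to every cardinal appearing in $\Lambda$.

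For the cycle, $(1)\Rightarrow(2)$ --- indeed $\myarrow{\mu}{2^{\mu}}{\Lambda}\Rightarrow\myarrow{\mu}{\kappa}{\Lambda}$ for all $\kappa\ge 2^{\mu}$ --- is just the monotonicity remark following Definition \ref{defL}; combined with the rest of the cycle this also validates the ``equivalently, all'' in clause $(2)$. For $(2)\Rightarrow(3)$ the point is that $\myarrow{\mu}{\kappa}{\Lambda}$ implies $(3)$ for \emph{every} $\kappa$: a witnessing sequence $(f_j)_{j\in J}$ as in Definition \ref{defL} is in particular a family of functions at our disposal, so if $f_j(D)$ is uniform over $\lambda_j$ for some $j$, then $D$ is already $\lambda_j$-decomposable with $\lambda_j\in\Lambda$. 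The one substantive step is $(3)\Rightarrow(1)$, and it is here that the exponent $2^{\mu}$ is exactly right: assuming $(3)$, build the sequence of Definition \ref{defL} by fixing, for each $\lambda\in\Lambda$, a bijection between the $2^{\mu}$-many slots allotted to $\lambda$ and the set ${}^{\mu}\lambda$ of \emph{all} functions $\mu\to\lambda$ (which is legitimate precisely because $|{}^{\mu}\lambda|=2^{\mu}$), placing the corresponding function on each slot. Given a uniform ultrafilter $D$ over $\mu$, condition $(3)$ provides $\lambda\in\Lambda$ with $D$ $\lambda$-decomposable, hence by Remark \ref{rmkmeas}(1) some $f\in{}^{\mu}\lambda$ with $f(D)$ uniform over $\lambda$; that $f$ occupies one of the slots, so the constructed sequence witnesses $\myarrow{\mu}{2^{\mu}}{\Lambda}$. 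This closes $(1)\Rightarrow(2)\Rightarrow(3)\Rightarrow(1)$.

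For $(3)\Leftrightarrow(4)$: the implication $(4)\Rightarrow(3)$ is immediate, since any uniform ultrafilter over $\mu$ is $\mu$-decomposable (via the partition of $\mu$ into singletons, or $g=\mathrm{id}_{\mu}$ in Remark \ref{rmkmeas}(1)) and hence is covered by the hypothesis of $(4)$. For $(3)\Rightarrow(4)$, let $D$ be a $\mu$-decomposable ultrafilter over an arbitrary set $I$; fix $g\colon I\to\mu$ with $E:=g(D)$ uniform over $\mu$, apply $(3)$ to $E$ to get $\lambda\in\Lambda$ and $f\colon\mu\to\lambda$ with $f(E)$ uniform over $\lambda$, and note that the quotient operation composes: $(f\circ g)(D)=f(g(D))=f(E)$, since $Y\in(f\circ g)(D)\Leftrightarrow g^{-1}(f^{-1}(Y))\in D\Leftrightarrow f^{-1}(Y)\in g(D)\Leftrightarrow Y\in f(E)$. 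Thus $f\circ g\colon I\to\lambda$ witnesses that $D$ is $\lambda$-decomposable, so $(4)$ holds.

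I do not expect a genuine obstacle: the statement is a repackaging of Remark \ref{rmkmeas}(1), the transitivity of the Rudin--Keisler quotient, and a single cardinal count. The two points that call for a little care are the asymmetry just exploited --- $\myarrow{\mu}{\kappa}{\Lambda}$ hands us condition $(3)$ for free, for any $\kappa$, whereas recovering $\myarrow{\mu}{2^{\mu}}{\Lambda}$ from $(3)$ really does use that there are exactly $2^{\mu}$ functions $\mu\to\lambda$ --- and the bookkeeping imposed by Definition \ref{defL}: in the step $(3)\Rightarrow(1)$ each $\lambda\in\Lambda$ must receive exactly $2^{\mu}$ slots, which coincides with $|{}^{\mu}\lambda|$ for infinite $\lambda\le\mu$, so no padding by repetition and no discarding is needed.
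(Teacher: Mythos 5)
Your proof is correct and follows essentially the same route as the paper: the reduction to $\lambda\le\mu$ via Fact \ref{fact0}(2), the cardinal count $\lambda^{\mu}=2^{\mu}$ to get the equivalence of (1)--(3), and the composition of quotient maps for $(3)\Leftrightarrow(4)$. You merely spell out in more detail what the paper dismisses as ``trivial from the definitions'' (and you write the composition in the correct order $f\circ g$, where the paper has a slip).
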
   

\begin{proof}
By Fact \ref{fact0}(2),  without loss of generality $\mu \geq \lambda$,
for every $\lambda \in \Lambda $.
Then the equivalence of (1)-(3)  is  trivial from the definitions, 
since for every $ \lambda \in \Lambda$ 
there are 
exactly $ \lambda ^ \mu =2^\mu $ 
functions from $\mu $ to $\lambda$.

(4) $\Rightarrow $  (3) is trivial.

If (3) holds, and $D$ is 
a $\mu $-decomposable ultrafilter over, say, $I$, 
then there is some function
$g: I \to \mu $ such that $g(D)$ is uniform over
$\mu $. By (3) $g(D)$ is 
$\lambda$-decomposable, for some $\lambda \in \Lambda $. 
This  is witnessed by some
$f: \mu  \to \lambda $,
thus $g \circ f$ witnesses the 
 $\lambda$-decomposability of $D$.
\end{proof}

By Fact \ref{fact1}, the only interesting cases in 
$ \myarrow {\mu}{\kappa}{\Lambda} $ 
are when $\kappa \leq 2^ \mu$.
As $\kappa$ grows larger in
$ \myarrow {\mu}{\kappa}{\Lambda} $, 
we get a weaker notion, but at the point
$\kappa = 2^ \mu$ we already get the minimum strength.

The equivalence of (1) and (2) in Fact \ref{fact1}
justifies the notation
$ \myarrow {\mu}{\infty}{\Lambda} $ 
used in in \cite[Section 6]{tproc2} in place of
$ \myarrow {\mu}{2^\mu}{\Lambda} $. 
In \cite[Theorem 6.3 and Corollary 6.6]{tproc2} we have listed many results about 
$ \myarrow {\mu}{2^\mu}{\Lambda} $. 
All these results can be appropriately
generalized to the relation
$ \myarrow {\mu}{\kappa}{\Lambda} $, but in
a few cases we have not yet written down full details.

Notice also that, as discussed in \cite[p. 343]{mru}
in a parallel situation,
while perhaps Condition (3) in \ref{fact1}
might appear simpler and more intuitive than Condition (4),  
the latter is very useful.
For example, using (4)
one immediately gets that
$ \myarrow {\mu}{2^\mu}{\lambda} $ 
and 
$ \myarrow { \lambda }{ 2^ \lambda }{ \upsilon } $ 
imply
$ \myarrow {\mu}{2 ^ \mu}{ \upsilon } $.
This is not immediately obvious using (3).
The above observation shall be expanded in 
Proposition \ref{trans}. 

\begin{remarks} \labbel{rmkmeas}    
(1) It is elementary to see that the two definitions 
of $\lambda$-decomposability 
given in the introduction are equivalent.
Indeed, if $\lambda$-decomposability is witnessed by some partition, then 
any enumeration of its classes produces a function
with the desired properties.
Conversely, any function witnessing $\lambda$-decomposability
naturally gives rise to a partition, which satisfies the corresponding
conditions. 

Similarly, we shall show in 
Lemma \ref{partitions1}. 
that
$\mu \Rightarrow ( \lambda _ j ) _{ j \in J } $
can be reformulated in terms of partitions.

(2) As mentioned  
in the introduction,
a cardinal $\mu $ is measurable if and only if 
there is a uniform ultrafilter $D$ over $\mu $ which is $\lambda$-indecomposable,
for every $\lambda < \mu$. Indeed,  a $\mu $-complete
ultrafilter is trivially $\lambda$-\brfrt indecomposable for every $\lambda< \mu$.   
For the other direction, and in contrapositive  form,
if $D$ over $I$ is not $\mu $-complete, then,
using the maximality of $D$,
there is a partition 
of $I$ into $< \mu $ sets such that
no member of the partition is in $D$.
If we take such a partition of minimal cardinality $\lambda$,
then no union of $<\lambda$ classes of the partition is in $D$,
thus  $D$ is $\lambda$-decomposable.

(3)
By (2) and as a particular case of Fact \ref{fact1},  
 a cardinal $\mu $ is measurable 
if and only if 
$ \myarrownot {\mu}{2^ \mu }{Card _{< \mu }} $ 
where 
$ Card _{< \mu } $
denotes the set
of all infinite cardinals $<\mu $. 
\end{remarks}

The  just introduced arrow notions satisfy some trivial but very useful
transitivity properties. The idea is similar to the proof of the equivalence of
(3) and (4) in Fact \ref{fact1}. In the statement of the next
proposition we shall assume that all the sets under consideration are nonempty.

\begin{proposition} \labbel{trans}
If $\mu \Rightarrow ( \lambda _ j ) _{ j \in J } $ 
and
$ \lambda_j  \Rightarrow ( \upsilon  _{j, h}  ) _{ h \in H_j } $
for every $j \in J$, then
 $\mu  \Rightarrow ( \upsilon  _{j, h}  ) _{j \in J, h \in H_j } $
  \begin{enumerate}[(1)]
\item[(2)]
If 
$ \myarrow {\mu}{\kappa}{\lambda} $ and
$ \myarrow { \lambda }{ \theta }{ \Upsilon } $ 
 then
$ \myarrow {\mu}{\kappa \cdot \theta }{ \Upsilon} $
\item[(3)] 
If 
$ \myarrow {\mu}{\kappa}{\Lambda} $,
$\kappa \geq  \omega, | \Lambda |$ and
$ \myarrow { \lambda }{\kappa}{\Upsilon _ \lambda } $ 
for every $\lambda \in \Lambda $,
then
$ \myarrow {\mu}{\kappa}{\bigcup _{ \lambda \in \Lambda }  \Upsilon _ \lambda} $ 
\item[(4)]
If $\kappa$ is infinite, 
$ \myarrow {\mu}{\kappa}{\Lambda} $, 
$ \upsilon \in \Lambda$ and
$ \myarrow { \upsilon }{\kappa}{ \Lambda \setminus \{ \upsilon \}} $ 
then
$ \myarrow {\mu}{\kappa}{\Lambda \setminus \{ \upsilon \}} $. 
\item[(5)]
More generally, if $\kappa \geq  \omega, | \Upsilon | $, 
$ \myarrow {\mu}{\kappa}{\Lambda} $, 
$ \Upsilon \subset \Lambda$ and
$ \myarrow { \upsilon }{\kappa}{\Lambda \setminus \Upsilon } $ 
for every $ \upsilon \in \Upsilon$, 
then
$ \myarrow {\mu}{\kappa}{\Lambda \setminus \Upsilon} $. 
  \end{enumerate}
 \end{proposition}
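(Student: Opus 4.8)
The plan is to prove the first, unnumbered, transitivity assertion by composing Rudin--Keisler quotients, and then to deduce (2)--(5) from it by elementary cardinal bookkeeping on multiplicities of indices.

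For the main assertion, fix a witness $(f_j)_{j\in J}$ of $\mu \Rightarrow ( \lambda _ j ) _{ j \in J }$ and, for each $j\in J$, a witness $(g_{j,h})_{h\in H_j}$ of $ \lambda_j \Rightarrow ( \upsilon _{j,h} ) _{ h \in H_j }$. I would put $k_{j,h} := g_{j,h}\circ f_j \colon \mu \to \upsilon_{j,h}$ and claim that $(k_{j,h})_{j\in J,\,h\in H_j}$ witnesses $ \mu \Rightarrow ( \upsilon _{j,h} ) _{j\in J,\, h\in H_j }$. Indeed, given a uniform ultrafilter $D$ over $\mu$, first choose $j\in J$ with $f_j(D)$ uniform over $\lambda_j$; then $f_j(D)$ is a uniform ultrafilter over $\lambda_j$, so there is $h\in H_j$ with $g_{j,h}(f_j(D))$ uniform over $\upsilon_{j,h}$. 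It remains only to observe that $k_{j,h}(D)=g_{j,h}(f_j(D))$, which is the usual compositional behaviour of quotients: $Y\in k_{j,h}(D)$ iff $k_{j,h}^{-1}(Y)=f_j^{-1}\bigl(g_{j,h}^{-1}(Y)\bigr)\in D$ iff $g_{j,h}^{-1}(Y)\in f_j(D)$ iff $Y\in g_{j,h}(f_j(D))$. This is exactly the composition already used in the proof of Fact \ref{fact1}, so I expect no real obstacle here.

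For (2)--(5) the recipe is the same in each case: apply the main assertion with the outer family coming from the given $\myarrow{\mu}{\kappa}{\lambda}$ or $\myarrow{\mu}{\kappa}{\Lambda}$, and with the inner family attached to an outer index $j$ taken either from the relevant hypothesis $ \lambda_j \Rightarrow \dots$ (when $\lambda_j$ is a cardinal we want to ``expand'' or ``absorb'') or from the trivial one-term instance $\lambda_j \Rightarrow (\lambda_j)$ (when $\lambda_j$ is to be left alone). Then one counts, for each cardinal on the right-hand side, its number of occurrences in the resulting sequence. In (2) each $\upsilon\in\Upsilon$ occurs once for each of the $\kappa$ outer indices and, for each of those, $\theta$ times, hence exactly $\kappa\cdot\theta$ times, and nothing else occurs, so the result is $\myarrow{\mu}{\kappa\cdot\theta}{\Upsilon}$. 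In (3) a given $\upsilon\in\bigcup_{\lambda\in\Lambda}\Upsilon_\lambda$ occurs $n\cdot\kappa\cdot\kappa$ times, where $1\le n=|\{\lambda\in\Lambda:\upsilon\in\Upsilon_\lambda\}|\le|\Lambda|\le\kappa$, so exactly $\kappa$ times, using $\kappa\ge\omega$. In (5) a given $\lambda\in\Lambda\setminus\Upsilon$ occurs $\kappa$ times from the ``kept'' outer indices plus $|\Upsilon|\cdot\kappa\cdot\kappa$ times from the ``absorbed'' ones (those $j$ with $\lambda_j\in\Upsilon$), hence $\kappa$ times since $\kappa\ge\omega,|\Upsilon|$, while no member of $\Upsilon$ survives; part (4) is the special case $\Upsilon=\{\upsilon\}$, where $|\Upsilon|=1$ makes the extra hypothesis on $|\Lambda|$ unnecessary.

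The only thing that really needs care---and which I would single out as the main (if minor) difficulty---is checking that the sequence produced by the main assertion, whose index set is the disjoint sum $\coprod_{j\in J}H_j$, has \emph{exactly} the prescribed multiplicity for each value (not just ``at least''), and contains no stray values, so that, after a harmless permutation of indices, it is literally the canonical sequence from Definition \ref{defL} for the target relation. This is where the arithmetic hypotheses $\kappa\ge\omega,|\Lambda|$ and $\kappa\ge\omega,|\Upsilon|$ are consumed, and where I would invoke the permutation-invariance and supersequence-invariance of $\mu \Rightarrow ( \lambda _ j ) _{ j \in J }$ noted just after Definition \ref{def}.
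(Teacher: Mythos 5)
Your proposal is correct and follows essentially the same route as the paper: part (1) by composing the witnessing functions (via $k_{j,h}(D)=g_{j,h}(f_j(D))$), and (2)--(5) by applying (1) with the trivial one-term instances $\lambda_j\Rightarrow(\lambda_j)$ for the cardinals to be left untouched, together with the multiplicity count that uses $\kappa\ge\omega,|\Lambda|$ (resp.\ $\kappa\ge\omega,|\Upsilon|$) and the permutation/supersequence invariance. The only blemish is cosmetic: in deducing (4) from (5) the hypothesis that becomes vacuous is $\kappa\ge|\Upsilon|$ (since $|\Upsilon|=1$), not a condition on $|\Lambda|$.
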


\begin{proof}
(1) Just consider the compositions of the functions
given by
$\mu \Rightarrow ( \lambda _ j ) _{ j \in J } $ 
and
$ \lambda_j  \Rightarrow ( \upsilon  _{j, h}  ) _{ h \in H_j } $.

(2) and (3) are immediate from (1).

(4) Apply (1) by using the trivial relation
$ \myarrow { \lambda }{\kappa}{\lambda} $ 
for every $\lambda \not= \upsilon $.
That is, write
$ \myarrow {\mu}{\kappa}{\Lambda} $ 
as 
$\mu \Rightarrow ( \lambda _ j ) _{ j \in J } $
and,
for  $\lambda_j= \lambda \not= \upsilon$,
take $H _{j} $ a singleton $  \{  h_j \}  $ 
and $ \upsilon  _{j, h_j} = \lambda _j$. 

(5) Same as (4), 
by using 
$ \myarrow { \lambda }{\kappa}{\lambda} $ 
for every $\lambda \not\in \Upsilon $.
 \end{proof} 

\begin{theorem} \labbel{propert} (1) 
$ \myarrow {\mu}{1}{\cf \mu } $. 
\begin{enumerate}[(1)] \setcounter{enumi}{1}
 \item 
If $\mu $ is regular then
$ \myarrow {\mu^+}{\mu^+}{\mu} $. 
 \item
 More generally, if $\mu $ is regular then
$ \myarrow {\mu^{+n}}{\mu^{+n}}{\mu} $. 
\item 
If $\lambda$ is regular and $\cf\mu = \lambda ^{+n} $  then
$ \myarrow {\mu}{\cf\mu}{ \lambda } $. 
\item
If $\mu $ is singular then 
$ \myarrow {\mu^+}{\mu^+}{ \{ \cf \mu\} \cup \Lambda  } $,
for every $\Lambda \subseteq Reg_{< \mu}  $ cofinal in $\mu $. 
 \end{enumerate}
\end{theorem}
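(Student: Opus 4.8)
My plan is to handle the five items in order, with essentially all of the content concentrated in (2): once (2) is available, (3) is a short induction on $n$, (4) is a combination of (1) and (3), and (5) runs exactly like (2) with the ``regular'' decomposition input replaced by its ``singular successor'' analogue. The glue throughout is the transitivity Proposition \ref{trans}(2) together with the monotonicity remark recorded after Definition \ref{defL}.

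First, (1). If $\mu$ is regular then $\cf\mu=\mu$ and the identity map witnesses $\myarrow{\mu}{1}{\mu}$. If $\mu$ is singular, fix a strictly increasing sequence $\langle\mu_i\mid i<\cf\mu\rangle$ cofinal in $\mu$ with $\mu_0=0$, and let $f\colon\mu\to\cf\mu$ send $\alpha$ to the unique $i$ with $\mu_i\le\alpha<\mu_{i+1}$. For a uniform ultrafilter $D$ over $\mu$ and $Y\subseteq\cf\mu$ with $|Y|<\cf\mu$, the set $Y$ is bounded in $\cf\mu$ (regularity of $\cf\mu$), so $f^{-1}(Y)$ is bounded below $\mu$ and hence not in $D$; thus $f(D)$ has no member of size $<\cf\mu$, i.e.\ $f(D)$ is uniform over $\cf\mu$. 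So the single function $f$ witnesses $\myarrow{\mu}{1}{\cf\mu}$.

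Next, (2), which is the crux. Unwinding Definition \ref{def}, the assertion $\myarrow{\mu^+}{\mu^+}{\mu}$ amounts to two things: (i) every uniform ultrafilter over $\mu^+$ is $\mu$-decomposable, and (ii) witnesses for this can be found inside one fixed family of $\mu^+$ functions from $\mu^+$ to $\mu$. Part (i) is the classical fact that a uniform ultrafilter on the successor of a regular cardinal is decomposable at that cardinal; I would take it either from the literature (see the references collected in \cite{mru}) or reprove it as follows, the reproof also yielding (ii). For $\alpha\in[\mu,\mu^+)$ fix a bijection $\pi_\alpha\colon\mu\to\alpha$ all of whose proper initial segments $\pi_\alpha[\zeta]$ ($\zeta<\mu$) are bounded in $\alpha$ --- possible since $\mu$ is regular, by writing $\alpha$ as an increasing union of $\mu$ sets of size $<\mu$ and enumerating the resulting blocks consecutively --- and put $g_\zeta(\alpha)=\sup\pi_\alpha[\zeta]<\alpha$, a regressive function with $g_\zeta\le g_{\zeta'}$ pointwise for $\zeta\le\zeta'$. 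Let $D$ be uniform over $\mu^+$. If $g_\zeta(D)$ is not uniform over $\mu^+$ for every $\zeta<\mu$, then each $g_\zeta(D)$ contains an interval $[0,\beta_\zeta)$ with $\beta_\zeta<\mu^+$; as $\mu^+$ is regular, $\beta^{*}:=\sup_{\zeta<\mu}\beta_\zeta<\mu^+$, and the function $f_{\beta^{*}}(\alpha)=\pi_\alpha^{-1}(\beta^{*})$ (for $\alpha>\beta^{*}$, arbitrary otherwise) satisfies $\{\alpha\mid f_{\beta^{*}}(\alpha)\ge\zeta\}\supseteq g_\zeta^{-1}([0,\beta^{*}))\in D$ for all $\zeta<\mu$, so $f_{\beta^{*}}(D)$ is uniform over $\mu$ --- and $f_{\beta^{*}}$ lies in the canonical family $\{f_\beta\mid\beta<\mu^+\}$, giving (ii). Otherwise some $g_\zeta$ pushes $D$ forward to an ultrafilter $\leq_{RK}D$ that is still uniform over $\mu^+$ but concentrates on ordinals of cofinality $<\mu$, and one iterates. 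The hard part will be precisely this last step: excluding the Rudin--Keisler ``plateau'' where the pushforward is $RK$-equivalent to $D$, so that the iteration terminates (well-foundedness of $\leq_{RK}$ only disposes of a strictly descending chain). This is exactly where the known structure theory of the decomposition spectrum of ultrafilters has to be invoked, and it is the reason (2) is not a soft statement.

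Finally, (3), (4), (5). For (3), induct on $n$: the base case $n=1$ is (2); for the step, $\mu^{+n}$ is regular, so (2) gives $\myarrow{\mu^{+(n+1)}}{\mu^{+(n+1)}}{\mu^{+n}}$, and combining this with the inductive hypothesis $\myarrow{\mu^{+n}}{\mu^{+n}}{\mu}$ via Proposition \ref{trans}(2) gives $\myarrow{\mu^{+(n+1)}}{\mu^{+(n+1)}\cdot\mu^{+n}}{\mu}=\myarrow{\mu^{+(n+1)}}{\mu^{+(n+1)}}{\mu}$. For (4): by (1), $\myarrow{\mu}{1}{\cf\mu}=\myarrow{\mu}{1}{\lambda^{+n}}$; by (3) applied to the regular $\lambda$, $\myarrow{\lambda^{+n}}{\lambda^{+n}}{\lambda}$; and Proposition \ref{trans}(2) gives $\myarrow{\mu}{1\cdot\lambda^{+n}}{\lambda}=\myarrow{\mu}{\cf\mu}{\lambda}$. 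For (5), one repeats the scheme of (2) for $\mu^+$ with $\mu$ singular, the input now being the Prikry-type dichotomy that a uniform ultrafilter over $\mu^+$ is either $\cf\mu$-decomposable or $\lambda$-decomposable for cofinally many regular $\lambda<\mu$ (again, \cite{mru} and its references). Given $\Lambda\subseteq Reg_{<\mu}$ cofinal in $\mu$, one assembles $\mu^+$ canonical functions for $\cf\mu$ (as in (1)) and for each $\lambda\in\Lambda$ (as in (2)); since $\Lambda$ is cofinal it meets the cofinal set of regulars at which a given $D$ decomposes, producing the required index $j$. As in (2), the delicate point is that the decomposition can always be routed through a prescribed cofinal $\Lambda$, which is again supplied by the structure theory rather than by formal manipulation.
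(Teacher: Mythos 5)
Your handling of (1), (3) and (4) coincides with the paper's: (1) via the function induced by an increasing cofinal sequence, (3) by iterating (2) through Proposition \ref{trans}(2), and (4) by composing (1) with (3) via Proposition \ref{trans}(2). For (2) and (5) the paper gives no proof at all---it simply refers to \cite{bumi,arch}---so your instinct to import the decomposability input from the literature is consistent with the source. But your attempted self-contained proof of (2) has two genuine defects. First, for $\mu>\omega$ the bijections $\pi_\alpha\colon\mu\to\alpha$ ``all of whose proper initial segments are bounded in $\alpha$'' do not exist when $\cf\alpha<\mu$: if $\langle\gamma_i\rangle_{i<\cf\alpha}$ is cofinal in $\alpha$, the preimages $\pi_\alpha^{-1}(\gamma_i)$ form a set of fewer than $\mu$ ordinals below the regular $\mu$, hence are all below some $\zeta<\mu$, so $\pi_\alpha[\zeta]$ is already cofinal in $\alpha$. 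Since $D$ may perfectly well concentrate on the (stationary) set of $\alpha<\mu^+$ of cofinality $<\mu$, the functions $g_\zeta(\alpha)=\sup\pi_\alpha[\zeta]$ fail to be regressive exactly where you need them, and the dichotomy your argument rests on is not even well posed. Second, as you yourself concede, the ``otherwise'' branch has no termination argument: well-foundedness of the Rudin--Keisler order does not exclude the plateau where the pushforward remains uniform over $\mu^+$.

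The fallback ``take (i) from the literature'' does not close the gap either. By the pattern of Fact \ref{fact1}, the bare statement that every uniform ultrafilter over $\mu^+$ is $\mu$-decomposable only yields $\myarrow{\mu^+}{2^{\mu^+}}{\mu}$; the entire content of (2) is your point (ii), namely that one fixed family of $\mu^+$ functions witnesses this for all $D$ simultaneously, and that is precisely what your sketch establishes only in the easy case. The same remark applies to (5), where you again defer the quantitative content to ``the structure theory.'' What must be cited is the $\mu^+$-function form proved in \cite{bumi} and \cite{arch} (where the bound on the number of functions corresponds to the cardinality of the theory in the model-theoretic formulation), not merely the decomposability results surveyed in \cite{mru}; with that citation in place your derivations of (3), (4) and the assembly step in (5) go through as in the paper.
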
 

\begin{proof}
(1) If $\mu $ is regular, this is trivial.
If $\mu $ is singular, 
let $(\mu _ \beta ) _{ \beta \in \cf \mu}  $
be an increasing  cofinal sequence in $\mu $, and consider
$f:  \mu  \to \cf \mu $ 
defined by $f( \alpha )=  
\inf \{ \beta \in \cf \mu \mid  \alpha \not \in \mu_ \beta  \} $.  

See \cite{bumi,arch}  for  proofs of (2) and (5), though with slightly different notations.

(3) follows from Proposition \ref{trans}(2)
and iterated applications of (2). Similarly,
(4) follows from (1), (3) and Proposition \ref{trans}(2). 
\end{proof}

More properties of 
$ \myarrow {\mu}{\kappa}{\lambda} $
can be obtained from Theorem \ref{propert} and  Proposition \ref{trans},
for example combining  \ref{propert}(3) and (5).

\begin{corollary} \labbel{correduc}
Suppose that $ \myarrow {\mu}{ \kappa }{ \Lambda } $
and let
$\Lambda' = 
\{ \lambda \in \Lambda \mid  \myarrownot { \lambda }{ \kappa }{ \Lambda \cap Card _{< \lambda } } \} $. 
(1) If $\kappa \geq \omega, |  \Lambda \setminus \Lambda '|$ then $ \myarrow {\mu}{ \kappa }{ \Lambda' } $.
Moreover, (2) if $ \lambda \in \Lambda'$ then 
 (a) if $\lambda$ is singular then $\cf\lambda \not\in \Lambda $;
(b)  if $\lambda= \xi^+$ and $ \xi$ is regular, then $\xi \not \in \Lambda   $. 
(c)  if $\lambda= \xi^+$ and $ \xi$ is singular then  either 
$\cf\xi \not \in \Lambda   $ or $Card(\xi', \xi) \cap \Lambda   = \emptyset $,
for some $ \xi' < \xi$.  
 \end{corollary}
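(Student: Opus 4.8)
The plan is to derive the whole statement from Proposition~\ref{trans}, Theorem~\ref{propert} and the monotonicity of $\Rightarrow$ in its index set. For~(1), set $\Delta=\Lambda\setminus\Lambda'$; by hypothesis $\kappa\geq\omega$ and $\kappa\geq|\Delta|$, and $\min\Lambda\in\Lambda'$, since otherwise $\myarrow{\min\Lambda}{\kappa}{\Lambda\cap Card_{<\min\Lambda}}=\myarrow{\min\Lambda}{\kappa}{\emptyset}$ would hold, against the convention in Definition~\ref{defL}. First I would prove, by induction along the well-ordered set $\Delta$, that $\myarrow{\upsilon}{\kappa}{\Lambda'\cap Card_{<\upsilon}}$ for every $\upsilon\in\Delta$: fixing $\upsilon\in\Delta$, from $\upsilon\notin\Lambda'$ we get $\myarrow{\upsilon}{\kappa}{\Lambda\cap Card_{<\upsilon}}$, and $\Lambda\cap Card_{<\upsilon}=(\Lambda'\cap Card_{<\upsilon})\cup(\Delta\cap Card_{<\upsilon})$; if $\Delta\cap Card_{<\upsilon}=\emptyset$ there is nothing to prove, and otherwise $\Delta\cap Card_{<\upsilon}$ is a nonempty subset of $\Lambda\cap Card_{<\upsilon}$ of size $\leq|\Delta|\leq\kappa$, the inductive hypothesis and monotonicity give $\myarrow{\upsilon'}{\kappa}{\Lambda'\cap Card_{<\upsilon}}$ for every $\upsilon'\in\Delta\cap Card_{<\upsilon}$, and Proposition~\ref{trans}(5) --- applied with $\upsilon$, $\Lambda\cap Card_{<\upsilon}$, $\Delta\cap Card_{<\upsilon}$ in place of $\mu$, $\Lambda$, $\Upsilon$ --- yields $\myarrow{\upsilon}{\kappa}{\Lambda'\cap Card_{<\upsilon}}$. (One checks in passing that $\Lambda'\cap Card_{<\upsilon}\neq\emptyset$, so that the inclusion $\Delta\cap Card_{<\upsilon}\subsetneq\Lambda\cap Card_{<\upsilon}$ is proper; the alternative would force $\myarrow{\upsilon'}{\kappa}{\emptyset}$ for some $\upsilon'$ through the inductive hypothesis.) A last application of Proposition~\ref{trans}(5), now with $\mu$, $\Lambda$ and $\Upsilon=\Delta$, is then legitimate --- $\kappa\geq\omega,|\Delta|$, $\Delta\subsetneq\Lambda$ because $\min\Lambda\in\Lambda'$, and $\myarrow{\upsilon}{\kappa}{\Lambda'\cap Card_{<\upsilon}}$ gives $\myarrow{\upsilon}{\kappa}{\Lambda\setminus\Delta}$ by monotonicity --- and it produces $\myarrow{\mu}{\kappa}{\Lambda'}$.

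For~(2), all three parts share one schema: assume toward a contradiction that the forbidden cardinal(s) lie in $\Lambda$; then, using Theorem~\ref{propert}, exhibit $\myarrow{\lambda}{\kappa}{\Delta_0}$ with $\Delta_0\subseteq\Lambda\cap Card_{<\lambda}$, whence $\myarrow{\lambda}{\kappa}{\Lambda\cap Card_{<\lambda}}$ by monotonicity, contradicting $\lambda\in\Lambda'$. In case~(a), $\lambda$ is singular, so Theorem~\ref{propert}(1) gives $\myarrow{\lambda}{1}{\cf\lambda}$, hence $\myarrow{\lambda}{\kappa}{\cf\lambda}$, and if $\cf\lambda\in\Lambda$ then $\cf\lambda\in\Lambda\cap Card_{<\lambda}$; this already settles~(a) for every $\kappa\geq1$. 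In case~(b), $\lambda=\xi^+$ with $\xi$ regular, so Theorem~\ref{propert}(2) gives $\myarrow{\xi^+}{\xi^+}{\xi}$, and if also $\xi\in\Lambda$ we conclude --- provided the index set of that arrow may be taken of size $\leq\kappa$.

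Case~(c) is the substantial one. With $\lambda=\xi^+$ and $\xi$ singular, I would assume $\cf\xi\in\Lambda$ and that $Card(\xi',\xi)\cap\Lambda\neq\emptyset$ for every $\xi'<\xi$ (so $\Lambda$ is cofinal in $\xi$), choose a cofinal family $\Lambda_0\subseteq Reg_{<\xi}$ --- say consisting of successors $\rho^+$ with $\rho\in\Lambda\cap Card_{<\xi}$, which are regular and cofinal in the limit cardinal $\xi$ --- apply Theorem~\ref{propert}(5) to obtain $\myarrow{\xi^+}{\xi^+}{\{\cf\xi\}\cup\Lambda_0}$, and then, using the trivial arrow $\myarrow{\cf\xi}{\kappa}{\cf\xi}$ together with Theorem~\ref{propert}(2),(5), reduce each $\rho^+\in\Lambda_0$ down to members of $\Lambda\cap Card_{<\xi}$; composing via Proposition~\ref{trans} gives $\myarrow{\lambda}{\kappa}{\Lambda\cap Card_{<\lambda}}$.

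The expected obstacle is case~(c), for two intertwined reasons. First, when some $\rho\in\Lambda$ used in the construction is itself singular, Theorem~\ref{propert}(5) reduces $\rho^+$ only to $\{\cf\rho\}$ together with a fresh cofinal family in $\rho$, and neither need lie in $\Lambda$; so one must actually run an induction on cardinals $\leq\xi$, propagating the hypothesis ``$\Lambda$ cofinal below'' through the reduction. Second --- and this pervades part~(2) --- one has to keep the index sets produced by the successive uses of Theorem~\ref{propert}(2),(5) and Proposition~\ref{trans} of cardinality $\leq\kappa$; this bookkeeping is trivial for~(a), immediate for~(b) once $\kappa$ is large enough relative to $\xi$, and genuinely delicate for~(c), and it is where the real work lies.
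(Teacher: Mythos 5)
Your proof of part (1) is correct and is in substance the paper's own argument: the paper runs a minimal-counterexample induction on $\mu$ (treating the case $\mu \in \Lambda$ separately), while you induct directly along the well-ordered set $\Delta = \Lambda \setminus \Lambda'$; the two are interchangeable, and your observation that $\min \Lambda \in \Lambda'$ by the convention that $ \myarrow {\cdot}{\kappa}{\emptyset} $ is false correctly supplies the nonemptiness needed to invoke Proposition \ref{trans}(5). Part (2)(a) is likewise exactly the intended one-line appeal to Theorem \ref{propert}(1).

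The gap is in (2)(b) and above all (2)(c), and you have diagnosed it without repairing it. For (b), Theorem \ref{propert}(2) yields $ \myarrow {\xi^+}{\xi^+}{\xi} $, and monotonicity in the superscript only goes upward, so contradicting $\lambda = \xi^+ \in \Lambda'$ requires $\kappa \geq \xi^+$; your parenthetical ``provided the index set may be taken of size $\leq \kappa$'' is precisely the missing hypothesis, and a complete proof must either add $\kappa \geq \lambda$ to the statement or produce smaller witnessing families, neither of which you do. For (c) the situation is worse: your plan reduces each $\rho^+$ (for $\rho \in \Lambda$ singular, with such $\rho$ cofinal in $\xi$) via Theorem \ref{propert}(5) to $\{\cf\rho\} \cup \Lambda_1$ with $\Lambda_1 \subseteq Reg_{<\rho}$ cofinal in $\rho$, but none of these cardinals need belong to $\Lambda$ or reduce into it, so the induction you propose has no termination guarantee inside $\Lambda$ --- when every member of $\Lambda$ below $\xi$ is singular, Theorem \ref{propert} together with Proposition \ref{trans} alone never re-enters $\Lambda$. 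Closing (c) as stated (with $Card(\xi',\xi)$ rather than $Reg(\xi',\xi)$) needs an additional ingredient, namely a decomposability result at singular limits of decomposable cardinals of the kind found in \cite{mru}, not just more careful cardinality bookkeeping. The paper itself dismisses (2) as immediate from Theorem \ref{propert} and offers no more detail than you do; but, as submitted, your argument establishes (1) and (2)(a) only.
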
 

\begin{proof}
(1)
By Fact \ref{fact0}(2) without loss of generality $\mu \geq \sup \Lambda $.  

We first prove (1) under the additional assumption that
$\mu \not\in \Lambda $.
Fix any $\kappa \geq \omega $, suppose by contradiction that (1) fails
with respect to that $\kappa$,
 and consider a counterexample in which $\mu $ is of minimal cardinality.
Thus $ \myarrow {\mu}{ \kappa }{ \Lambda } $,
 $ \myarrownot {\mu}{ \kappa }{ \Lambda' } $ and
$\kappa \geq |  \Lambda \setminus \Lambda '|$.
Let $ \Upsilon = \Lambda \setminus \Lambda' = \{ \lambda \in \Lambda \mid  \myarrow { \lambda }{ \kappa }{ \Lambda \cap Card _{< \lambda } } \}$,
thus  $ \Lambda' = \Lambda \setminus \Upsilon$.

Suppose that $ \upsilon  \in \Upsilon $, hence $ \upsilon < \mu $, since 
$\mu \not \in \Lambda $.   By definition, 
$\myarrow { \upsilon  }{ \kappa }{ \Lambda \cap Card _{< \upsilon  } }$. 
Let $\Lambda_1= \Lambda \cap Card _{< \upsilon  }$
and consider the statement of the corollary  
 with $ \upsilon $ in place of $\mu $ and
$\Lambda_1$ in place of $\Lambda$.
Clearly, again by  Fact \ref{fact0}(2),
$\Lambda_1'  = \Lambda' \cap Card _{< \upsilon }$,
thus 
$   |  \Lambda_1 \setminus \Lambda_1 '|  \leq  |  \Lambda \setminus \Lambda '| \leq \kappa $. 
By the minimality of $\mu $, and since
$ \upsilon \not \in \Lambda_1$,  
we can apply (1) thus getting 
$\myarrow { \upsilon  }{ \kappa }{ \Lambda_1' }$.
Since
 $ \Lambda_1' \subseteq \Lambda' = \Lambda \setminus \Upsilon$,
we get
$\myarrow { \upsilon  }{ \kappa }{ \Lambda \setminus \Upsilon }$. 

In the previous paragraph we have proved that
if  $ \upsilon  \in \Upsilon $ then 
$\myarrow { \upsilon  }{ \kappa }{ \Lambda \setminus \Upsilon }$. 
Since 
$ \myarrow {\mu}{ \kappa }{ \Lambda } $, by Proposition \ref{trans}(5) 
we get 
$\myarrow { \upsilon  }{ \kappa }{ \Lambda \setminus \Upsilon }$,
a contradiction, since $ \Lambda \setminus \Upsilon = \Lambda '$. 
We have proved (1) under the assumption that
$\mu \not\in \Lambda $.

Now suppose that $\mu \in \Lambda $. 
If $\mu \in \Lambda' $ then trivially   $ \myarrow {\mu}{ \kappa }{ \Lambda' } $.
Otherwise $\mu \not\in \Lambda' $, then, by definition,
 $\myarrow{ \mu  }{ \kappa }{ \Lambda \cap Card _{< \mu  } }$.
Letting $ \Lambda_2 =  \Lambda \cap Card _{< \mu  } $,
and since $\mu \not \in \Lambda_2$, we can apply the already
proved particular case, getting 
 $ \myarrow {\mu}{ \kappa }{ \Lambda_2 ' } $.  
But $ \Lambda_2 '  \subseteq  \Lambda' $,
thus  $ \myarrow {\mu}{ \kappa }{ \Lambda' } $.  
The proof of (1) is complete.

(2) is immediate from Theorem \ref{propert}.  
 \end{proof}  

\section{Filters, partitions} \labbel{partbo}

The principle $\mu \Rightarrow ( \lambda _ j ) _{ j \in J } $ 
admits a characterization in terms of filters 
(not necessarily ultra).
By a \emph{filter} we shall always mean a \emph{proper}
filter, that is $ \emptyset \not\in F$.  
If $F$ is a filter over some set $I$
and $f: I \to H$ is a function,
we denote by
$f(F)$ the filter over $H$ defined by
$Y \in f(F)$ if and only if 
$f ^{-1}(Y) \in F $. 
This extends the notation introduced for ultrafilters and 
 is connected with the \emph{Katetov order},
as we shall briefly discuss in Section \ref{kat}.
For every cardinal $\mu $, we shall denote by
$F_\mu$ the filter consisting of all subsets
$A$ of $\mu $ such that $| \mu \setminus A| < \mu$.
A filter $F$   over $\mu $ is \emph{uniform}
if all members of $F$ have cardinality $\mu $. We say
that  $F$ is \emph{strongly uniform} if
$F \supseteq F_ \mu$. Equivalently,
$F$ is strongly uniform if and only if    
 every filter extending $F$ is uniform
(since we are taking into account only proper filters).
Notice that an ultrafilter over $\mu $ is uniform
if and only if it is strongly uniform.
All the above definitions and properties
hold also when dealing with a field of subsets 
of $\mu $ $\mathcal F$  containing $F_ \mu$. 

\begin{proposition} \labbel{filt}
For every  sequence  
of functions 
 $f_ j  : \mu  \to \lambda _ j  $
($j \in J$),
the following conditions are equivalent. 
\begin{enumerate}[(1)]
   \item
The $f_ j $'s
witness $\mu \Rightarrow ( \lambda _ j ) _{ j \in J } $,
that is, 
for every  uniform ultrafilter $D$ over $ \mu $ 
there is $j \in J$ such that 
$f_ j (D)$ is uniform over $ \lambda _ j$.    
\item
For every strongly  uniform filter $F$ over $ \mu $ 
there is $j \in J$ such that 
$f_ j (F)$ is uniform over $ \lambda _ j$.    
\item
For every  family
 $(B_j) _{j \in J} $
such that  
$B_j  \subseteq  \lambda_j$ and 
$| B_j | < \lambda_j$,
for $j \in J$,
there is a finite set $N \subseteq J$
such that  
$| \bigcap_{j \in N} f_j ^{-1} (B_j) | < \mu$.
\end{enumerate}
 \end{proposition}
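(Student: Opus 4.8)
The plan is to prove the cyclic chain of implications $(2)\Rightarrow(1)\Rightarrow(3)\Rightarrow(2)$; once the definitions of $f_j(F)$, of uniformity of a filter, and of strong uniformity are unwound, each single implication is short, and no regularity hypothesis on $\mu$ or on the $\lambda_j$'s is needed. I would start with $(2)\Rightarrow(1)$, which is immediate: an ultrafilter over $\mu$ is uniform if and only if it is strongly uniform, so a uniform ultrafilter $D$ over $\mu$ is in particular a strongly uniform filter, and applying (2) to $F=D$ is exactly the conclusion of (1).

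Next I would prove $(1)\Rightarrow(3)$ contrapositively. Assume (3) fails and fix a family $(B_j)_{j\in J}$ with $B_j\subseteq\lambda_j$, $|B_j|<\lambda_j$, and $|\bigcap_{j\in N}f_j^{-1}(B_j)|=\mu$ for every finite $N\subseteq J$. Consider $\mathcal G=\{f_j^{-1}(B_j)\mid j\in J\}\cup F_\mu$. The crux is that $\mathcal G$ has the strong finite intersection property: a finite intersection of members of $\mathcal G$ has the form $\bigcap_{j\in N}f_j^{-1}(B_j)\cap C$ with $N$ finite and $C\in F_\mu$ (finite intersections within $F_\mu$ remain in $F_\mu$), so it is obtained from a set of cardinality $\mu$ by removing $|\mu\setminus C|<\mu$ elements; since the sum of two cardinals below $\mu$ stays below $\mu$, the result still has cardinality $\mu$, hence is nonempty. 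Thus $\mathcal G$ generates a proper filter $F\supseteq F_\mu$, which is strongly uniform. Extend $F$ to an ultrafilter $D$; since $D\supseteq F_\mu$, $D$ is uniform over $\mu$. For each $j$ we have $f_j^{-1}(B_j)\in D$, so $B_j\in f_j(D)$ with $|B_j|<\lambda_j$, whence $f_j(D)$ is not uniform over $\lambda_j$. As this happens for every $j$, the $f_j$'s fail to witness $\mu\Rightarrow(\lambda_j)_{j\in J}$, so (1) fails.

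Then I would do $(3)\Rightarrow(2)$, again contrapositively. Suppose (2) fails, witnessed by a strongly uniform filter $F$ over $\mu$ with $f_j(F)$ not uniform over $\lambda_j$ for every $j$. Every member of $f_j(F)$ is a subset of $\lambda_j$, so non-uniformity yields some $B_j\in f_j(F)$ with $|B_j|<\lambda_j$, and then $f_j^{-1}(B_j)\in F$ by the definition of $f_j(F)$. For any finite $N\subseteq J$ the set $\bigcap_{j\in N}f_j^{-1}(B_j)$ lies in $F$, and $F$, being strongly uniform and proper, is uniform, so this intersection has cardinality $\mu$ and in particular is not of cardinality $<\mu$. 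Hence $(B_j)_{j\in J}$ witnesses the failure of (3).

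I expect the only step calling for more than routine bookkeeping to be the cardinal-arithmetic observation in $(1)\Rightarrow(3)$ that deleting fewer than $\mu$ points from a set of size $\mu$ leaves a set of size $\mu$; this is precisely what guarantees that $\mathcal G$ generates a \emph{uniform}, not merely a proper, filter. Everything else is a matter of chasing the definitions. (An essentially equivalent route would be to establish $(1)\Leftrightarrow(2)$ and $(2)\Leftrightarrow(3)$ separately: for $(1)\Rightarrow(2)$ one extends the given strongly uniform $F$ to a uniform ultrafilter $D$ and notes that if $f_j(D)$ is uniform then so is its subfilter $f_j(F)$, since every member of $f_j(F)$ is a member of $f_j(D)$.)
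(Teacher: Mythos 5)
Your proof is correct and uses the same essential ingredients as the paper's: extending a strongly uniform filter (equivalently, a family with the finite intersection property together with $F_\mu$) to a uniform ultrafilter, and the observation that deleting fewer than $\mu$ points from a $\mu$-sized set leaves a $\mu$-sized set. The only difference is organizational — you run the cyclic chain $(2)\Rightarrow(1)\Rightarrow(3)\Rightarrow(2)$ where the paper proves $(1)\Leftrightarrow(2)$ directly and $(2)\Leftrightarrow(3)$ by comparing negations — so this is essentially the paper's argument.
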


\begin{proof} 
(2) $\Rightarrow $  (1) is trivial.

(1) $\Rightarrow $  (2) Suppose that (1)
holds, and that $F$ is a strongly uniform filter over $\mu $.
Extend $F$ to an ultrafilter $D$; thus $D$ is  uniform over $\mu $, hence 
by (1)
 $f_ j (D)$ is uniform over $ \lambda _ j$, for some $j \in J$.
Since  $f_ j (F) \subseteq f_ j (D)$, then also
  $f_ j (F)$ is uniform over $ \lambda _ j$.
Of course here we are heavily using the Axiom of Choice, 
at least in its weaker incarnation as the Prime Ideal Theorem. 

(2) $\Leftrightarrow $  (3) We shall prove the
equivalence of the negations.
The negation of (2) means that there is 
a strongly uniform filter $F$ over $\mu $ 
such that for every $j \in J$ the filter $f_j(F)$
is not uniform over $\lambda_j$,
that is, there is $B_j  \in  f_j(F)$
such that $| B_j | < \lambda_j$.
A filter $F$ as in the previous sentence exists 
if and only if 
there is  a family
 $(B_j) _{j \in J} $
such that  
$B_j  \subseteq  \lambda_j$, 
$| B_j | < \lambda_j$,
for $j \in J$, and 
$\{  f_j ^{-1} (B_j) \mid j \in J \} \cup F_ \mu$
has the finite intersection property. 
This is exactly the negation of (3).
\end{proof} 

We can also equivalently state 
$\mu \Rightarrow ( \lambda _ j ) _{ j \in J } $ 
in an ``internal way  on $\mu $''
by using partitions. In view of Fact \ref{fact1}, 
this generalizes
Remark \ref{rmkmeas}(1). 

\begin{lemma} \labbel{partitions1} 
Suppose that $\mu \geq \lambda, \lambda _j$, for $j \in J$.
  \begin{enumerate}[(1)]
 \item   
$\mu \Rightarrow ( \lambda _ j ) _{ j \in J } $
if and only if:
 there is a sequence $(\pi_ j ) _{ j \in J }$
of partitions of $\mu $ 
such that each $\pi_j$ has $\lambda_j$ classes 
and, for every uniform ultrafilter $D$ over $\mu $, 
there is $j \in J$ such that no union
of $<\lambda_j$ classes of $\pi_j$ belongs to $D$. 
\item
In particular, 
$ \myarrow {\mu}{\kappa}{\lambda} $ 
if and only if 
there is a sequence $(\pi_ \gamma  ) _{ \gamma \in \kappa  }$
of partitions of $\mu $ into
$\lambda$ classes 
such that,  for every uniform ultrafilter $D$ over $\mu $, 
there is $ \gamma \in \kappa $ such that no union
of $<\lambda$ classes of $\pi _ \gamma $ belongs to $D$. 
  \end{enumerate}
\end{lemma}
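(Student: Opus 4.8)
The plan is to exploit the elementary bijection between functions and partitions that was already observed in Remark \ref{rmkmeas}(1): a function $f_j:\mu\to\lambda_j$ with $|f_j(\mu)|=\lambda_j$ induces a partition $\pi_j$ of $\mu$ into $\lambda_j$ classes, namely the fibres $\{f_j^{-1}(\{\alpha\})\mid\alpha\in\lambda_j\}$; conversely, any partition $\pi_j$ of $\mu$ into $\lambda_j$ classes, together with a fixed enumeration of its classes by $\lambda_j$, yields such a function. So part (2) is just the special case of part (1) in which all $\lambda_j=\lambda$ and $|J|=\kappa$, and it suffices to prove (1). By Fact \ref{fact0}(1) we may moreover restrict attention, on both sides of the claimed equivalence, to indices $j$ with $|f_j(\mu)|=\lambda_j$ (on the partition side this restriction is automatic, since a partition with $\lambda_j$ classes has exactly $\lambda_j$ nonempty classes once we discard empty ones — here the hypothesis $\mu\geq\lambda_j$ guarantees there is no obstruction to having $\lambda_j$ nonempty classes).

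The heart of the matter is the following translation, to be checked for a fixed $j$ and a fixed uniform ultrafilter $D$ over $\mu$: \emph{$f_j(D)$ is uniform over $\lambda_j$ if and only if no union of $<\lambda_j$ classes of $\pi_j$ belongs to $D$.} I would argue this directly. A union of $<\lambda_j$ classes of $\pi_j$ has the form $f_j^{-1}(Y)$ for some $Y\subseteq\lambda_j$ with $|Y|<\lambda_j$, and conversely every such $f_j^{-1}(Y)$ is a union of $<\lambda_j$ classes. Now $f_j^{-1}(Y)\in D$ iff $Y\in f_j(D)$ by the definition of $f_j(D)$. Hence: some union of $<\lambda_j$ classes of $\pi_j$ lies in $D$ $\iff$ there is $Y\subseteq\lambda_j$ with $|Y|<\lambda_j$ and $Y\in f_j(D)$ $\iff$ $f_j(D)$ is \emph{not} uniform over $\lambda_j$ (recall uniformity of an ultrafilter over $\lambda_j$ means every member has cardinality $\lambda_j$, equivalently no member of cardinality $<\lambda_j$ belongs to it). Taking contrapositives gives the displayed equivalence.

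With this equivalence in hand the lemma falls out by pure quantifier manipulation. Suppose $\mu\Rightarrow(\lambda_j)_{j\in J}$, witnessed by $(f_j)_{j\in J}$; after passing to the subsequence with $|f_j(\mu)|=\lambda_j$ (still a witness, by Fact \ref{fact0}(1)) let $\pi_j$ be the fibre partition of $f_j$, which has exactly $\lambda_j$ classes. For any uniform ultrafilter $D$ over $\mu$ pick $j$ with $f_j(D)$ uniform over $\lambda_j$; by the displayed equivalence no union of $<\lambda_j$ classes of $\pi_j$ lies in $D$, as required. Conversely, given such a sequence $(\pi_j)_{j\in J}$ of partitions, choose for each $j$ an enumeration of the $\lambda_j$ classes of $\pi_j$ and let $f_j:\mu\to\lambda_j$ send each point to the index of its class; then $|f_j(\mu)|=\lambda_j$, and for any uniform ultrafilter $D$ over $\mu$, picking $j$ witnessing the partition condition, the displayed equivalence gives that $f_j(D)$ is uniform over $\lambda_j$. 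Hence the $f_j$'s witness $\mu\Rightarrow(\lambda_j)_{j\in J}$.

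I do not expect a genuine obstacle here — the argument is essentially bookkeeping once the fibre-partition correspondence and the identity $f_j^{-1}(Y)\in D\iff Y\in f_j(D)$ are in place. The one point deserving a word of care is the passage to the subsequence on which $|f_j(\mu)|=\lambda_j$: this is exactly what Fact \ref{fact0}(1) licenses, and it is also why the hypothesis $\mu\geq\lambda_j$ is stated, so that on the partition side a partition ``into $\lambda_j$ classes'' can be taken to have $\lambda_j$ genuinely nonempty classes and thus really corresponds to a surjection-up-to-cardinality onto $\lambda_j$.
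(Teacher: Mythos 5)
Your proof is correct and follows essentially the same route as the paper: pass to the indices with $|f_j(\mu)|=\lambda_j$ via Fact \ref{fact0}, translate between functions and their fibre partitions (using $f_j^{-1}(Y)\in D \iff Y\in f_j(D)$), and obtain (2) as the special case of (1). The paper's only extra bookkeeping is to fill in arbitrary partitions with $\lambda_j$ classes for the discarded indices $j\in J\setminus J'$, which your remark on supersequences covers.
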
 

 \begin{proof}
(1) Let
 $\mu \Rightarrow ( \lambda _ j ) _{ j \in J } $
be witnessed by $(f_ j ) _{ j \in J }$.
By Fact \ref{fact0},
then $(f_ j ) _{ j \in J' }$ witnesses
 $\mu \Rightarrow ( \lambda _ j ) _{ j \in J' } $,
where
$J' = \{ j \in J \mid |f_j(\mu)|= \lambda _j \}$. 
To each $f_j$ $(j \in J')$ there is naturally 
associated a partition $\pi_j$ of $\mu $ into $\lambda_j$ 
classes in such a way that 
the sufficient condition is satisfied with $J'$ in place of $J$.
Letting $\pi_j$ be arbitrary  with $|\lambda_j|$ many classes
for   $j \in J \setminus J'$ we get the condition.

Conversely, given $(\pi_ j ) _{ j \in J }$ partitions as in the sufficient condition,
 enumerate the classes of each $\pi_j$ and consider
the corresponding functions
 $f_ j  : \mu  \to \lambda _ j  $.
These witness $\mu \Rightarrow ( \lambda _ j ) _{ j \in J } $.

(2) follows trivially from the definition of $ \myarrow {\mu}{\kappa}{\lambda} $.
 \end{proof} 

If $(\pi_ j ) _{ j \in J }$ are partitions as in 
Lemma \ref{partitions1}(1) 
 we shall say that the $\pi_ j $'s
\emph{witness} $\mu \Rightarrow ( \lambda _ j ) _{ j \in J } $.
Though essentially trivial, 
 Lemma \ref{partitions1}
is useful, since  certain properties (e.~g., transitivity) of 
 $\mu \Rightarrow ( \lambda _ j ) _{ j \in J } $
 are best seen in terms of Definition \ref{def},
while  Lemma \ref{partitions1}
is useful if we want to work ``inside $\mu $'',
as we shall do in the next propositions.
Of course, this remark is nothing but a variant on the old story
of viewing homomorphic images as 
corresponding to equivalence relations.

For $(\pi_ j ) _{ j \in J }$  a sequence of partitions of $\mu $ 
such that each $\pi_j$ has $\lambda_j$ classes,
let $\mathcal F _ \mu (\pi_ j ) _{ j \in J }$  be the smallest field
of subsets of $\mu $ which
contains $F_ \mu$ and which,  for every $j \in J$,
contains all unions
of $<\lambda_j$ classes of $\pi_j$.

\begin{proposition} \labbel{partitions2} 
Suppose that $(\pi_ j ) _{ j \in J }$ 
is a sequence 
of partitions of $\mu $ 
such that each $\pi_j$ has $\lambda_j$ classes. 
Then the  following conditions are equivalent.  
\begin{enumerate}[(1)]   
\item
The $\pi_j$'s 
witness $\mu \Rightarrow ( \lambda _ j ) _{ j \in J } $
that is for every uniform ultrafilter $D$ over $\mu $ 
there is $j \in J$ such that no union
of $<\lambda_j$ classes of $\pi_j$ belongs to $D$.
\item
For every strongly uniform filter $F $ over $\mu $  
there is $j \in J$ such that no union
of $<\lambda_j$ classes of $\pi_j$ belongs to $F$.
\item
For every choice of subsets $A_j$ of $\mu $,
one for each $j \in J$,  
such that each $A_j$ is a union
of  $<\lambda_j$ classes of $\pi_j$,
 there is a finite set $N \subseteq J$ 
such that  $|\bigcap _{j \in n} A_j | < \mu $.
\item
For every strongly uniform filter $F$ 
over $\mathcal F _ \mu (\pi_ j ) _{ j \in J }$ 
there is $j \in J$ such that no union
of $<\lambda_j$ classes of $\pi_j$ belongs to $F$.
\item
For every  uniform ultrafilter $D$ 
over $\mathcal F _ \mu (\pi_ j ) _{ j \in J }$
there is $j \in J$ such that no union
of $<\lambda_j$ classes of $\pi_j$ belongs to $D$.
  \end{enumerate} 
 \end{proposition}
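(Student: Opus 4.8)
The plan is to prove the chain of implications $(1)\Rightarrow(2)\Rightarrow(3)\Rightarrow(1)$ among the first three conditions, exactly paralleling the proof of Proposition~\ref{filt}, and then to establish $(2)\Leftrightarrow(4)$ and $(1)\Leftrightarrow(5)$ to bring in the restricted field $\mathcal F _ \mu (\pi_ j ) _{ j \in J }$. First I would observe that $(2)\Rightarrow(1)$ is trivial, since a uniform ultrafilter over $\mu $ is in particular a strongly uniform filter. For $(1)\Rightarrow(2)$, given a strongly uniform filter $F$ over $\mu $, extend it (using the Prime Ideal Theorem) to an ultrafilter $D$; then $D \supseteq F \supseteq F_ \mu$ forces $D$ to be uniform, so by~(1) there is $j \in J$ with no union of $<\lambda_j$ classes of $\pi_j$ in $D$, and a fortiori none in the smaller filter $F$. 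For $(2)\Leftrightarrow(3)$ I would argue by contraposition, just as in Proposition~\ref{filt}: the negation of~(2) asserts the existence of a strongly uniform filter $F$ such that for each $j \in J$ some union $A_j$ of $<\lambda_j$ classes of $\pi_j$ lies in $F$; such an $F$ exists precisely when the family $\{ A_j \mid j \in J \} \cup F_ \mu$ has the finite intersection property, i.e.\ when every finite intersection $\bigcap _{j \in N} A_j$ has cardinality $\mu $ (being co-$<\mu$ intersected with such sets) — which is the negation of~(3). The only care needed is the bookkeeping that ``union of $<\lambda_j$ classes of $\pi_j$'' is stable under the operations used, and that $F_ \mu$ consists of co-$<\mu$ sets so a finite intersection with members of $F_ \mu$ changes nothing about having size $\mu $.

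Next I would handle the equivalence with the conditions $(4)$ and $(5)$ phrased over $\mathcal F _ \mu (\pi_ j ) _{ j \in J }$. The key point is that $\mathcal F _ \mu (\pi_ j ) _{ j \in J }$ is by definition a field of subsets of $\mu $ containing $F_ \mu$ and containing every union of $<\lambda_j$ classes of each $\pi_j$; hence the notions ``strongly uniform filter'' and ``uniform ultrafilter'' make sense relative to this field, as noted in the paragraph preceding Proposition~\ref{filt}. For $(5)\Leftrightarrow(4)$ the argument is the same relativized extension argument: a strongly uniform filter over the field extends to a uniform ultrafilter over the field (again invoking a form of choice, now the Boolean prime ideal theorem applied to the Boolean algebra $\mathcal F _ \mu (\pi_ j ) _{ j \in J }$), and conversely an ultrafilter is a strongly uniform filter, with the witnessing $j$ carrying over in both directions since membership of the relevant sets is internal to the field. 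For $(2)\Leftrightarrow(4)$, the forward direction is immediate by restriction: a strongly uniform filter $F$ over $\mathcal F _ \mu (\pi_ j ) _{ j \in J }$ generates a strongly uniform filter $F^*$ over all of $\mathcal P(\mu )$ (take the filter generated by $F$; it is proper because $F$ contains all co-$<\mu$ sets and is closed under finite intersection, so $\emptyset$ cannot be forced), and the witnessing $j$ for $F^*$ works for $F$ since the sets in question lie in the field. The reverse direction is trivial restriction of a strongly uniform filter over $\mathcal P(\mu )$ to the subfield — one only checks the restriction remains strongly uniform, which holds since $F_ \mu \subseteq \mathcal F _ \mu (\pi_ j ) _{ j \in J }$.

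I would then assemble the cycle: $(1)\Leftrightarrow(2)\Leftrightarrow(3)$ from the first half, $(2)\Leftrightarrow(4)\Leftrightarrow(5)$ from the second, and note $(1)\Leftrightarrow(5)$ falls out, completing the proof. The main obstacle — really the only delicate point — is making the filter-extension steps over the restricted field $\mathcal F _ \mu (\pi_ j ) _{ j \in J }$ fully rigorous: one must check that this field genuinely is closed under the Boolean operations (it is, being defined as the smallest such field), that a strongly uniform filter on it does not become improper when extended to an ultrafilter on it or when pushed up to $\mathcal P(\mu )$, and that ``uniform'' and ``strongly uniform'' coincide for ultrafilters on the field just as on $\mathcal P(\mu )$. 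All of these follow from the observation already recorded before Proposition~\ref{filt} that the theory of (strongly) uniform filters goes through verbatim for any field of subsets of $\mu $ containing $F_ \mu$; once that is invoked, each implication is a one-line argument. I would keep the writeup short, proving the negations for the $(2)\Leftrightarrow(3)$ step and otherwise citing the extension-of-filters pattern from Proposition~\ref{filt}.
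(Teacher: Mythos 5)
Your proposal is correct and follows essentially the same route as the paper: the equivalences $(1)\Leftrightarrow(2)\Leftrightarrow(3)$ and $(4)\Leftrightarrow(5)$ are obtained by repeating the extension-to-an-ultrafilter and finite-intersection-property arguments of Proposition~\ref{filt}, and $(2)\Leftrightarrow(4)$ by passing between a strongly uniform filter over $\mu$ and its trace on (or the filter it generates from) the field $\mathcal F _ \mu (\pi_ j ) _{ j \in J }$, which contains all the relevant sets. The paper compresses this last step into the single observation that a filter $F$ over $\mu$ satisfies the condition in $(2)$ if and only if $F \cap \mathcal F _ \mu (\pi_ j ) _{ j \in J }$ satisfies the condition in $(4)$, which is exactly what your restriction/extension argument establishes.
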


 \begin{proof}
The equivalences of (1)-(3) and of (4)-(5) are entirely similar to the proof of
Proposition \ref{filt} (the assumption that 
 $\mathcal F _ \mu (\pi_ j ) _{ j \in J } \supseteq F_ \mu$
is used in (4) $\Leftrightarrow $  (5)).

Then notice that (2) and (4) are equivalent since both conditions are actually evaluated in
$\mathcal F _ \mu (\pi_ j ) _{ j \in J }$, that is,
a filter $F$ over $\mu $ satisfies (2) if and only if 
$F \cap \mathcal F _ \mu (\pi_ j ) _{ j \in J }$ satisfies (4).
\end{proof}

\section{Topological equivalents} \labbel{top} 

If $X$ is a topological space 
and $Y $ is an infinite subset of $X$, 
a point $x \in X$ is a \emph{complete accumulation point}
of $Y$ if $| Y \cap U| = |Y|$, for every neighborhood 
$U$ of $x$.
The space $X$ is \emph{$\mu $-compact}
if every subset of cardinality $\mu $ has a complete accumulation point.
In the literature $\mu $-compactness has also been given various
other disparate names, such as $\CAP_ \mu $,
$C( \mu,\mu)$,  \emph{$[ \mu, \mu ]$-compactness in the sense of accumulation points},
etc. 

It is convenient to introduce also a slight modification dealing
with sequences rather than subsets.
If $(x_ \alpha ) _{ \alpha \in \mu } $ 
is a sequence (possibly with repetitions) of elements of $X$,
a point $x \in X$ is a \emph{$\mu $-complete accumulation point}
of $Y$ if $| \{ \alpha \in \mu \mid  x_ \alpha \in U\}|= \mu  $, for every neighborhood 
$U$ of $x$.
The space $X$ is \emph{$\mu $-${^*}$compact}
if every sequence $(x_ \alpha ) _{ \alpha \in \mu } $
of elements of $X$ 
 has a $\lambda$-complete accumulation point.
 
The above notions are connected by the following easy proposition,
which shows that the compactness 
notions are distinct only when $\mu $ is singular.
See \cite[VI, Proposition 1]{nuotop} 
or \cite[Proposition 3.3]{tproc2} 
for details. The latter proposition
is stated in a more general framework, the present case
is when $\mathcal F$ is the set of all singletons of $X$.

\begin{proposition} \labbel{singreg}
(1) If $\mu $ is regular then a topological space is 
$\mu $-$^*$compact if and only if it is 
$\mu $-compact.

(2) A topological space is 
$\mu $-$^*$compact if and only if it is 
both $\mu $-compact
and $\cf\mu $-compact.
\end{proposition}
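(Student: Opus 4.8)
The plan is to prove both statements directly from the definitions, exploiting the fact that the only difference between $\mu$-compactness and $\mu$-$^*$compactness is that the latter counts indices (with multiplicity) rather than the cardinality of the range set. For (1), first I would note the trivial direction: given a sequence $(x_\alpha)_{\alpha\in\mu}$, consider its range $Y=\{x_\alpha\mid\alpha\in\mu\}$. If $|Y|=\mu$, a complete accumulation point $x$ of $Y$ satisfies $|Y\cap U|=\mu$ for every neighborhood $U$, and since $\mu$ is regular and each fiber $\{\alpha\mid x_\alpha=y\}$ partitions $\mu$ into $|Y|\le\mu$ pieces, some counting argument (a union of $<\mu$ fibers each of size $<\mu$ has size $<\mu$ by regularity) upgrades this to $|\{\alpha\mid x_\alpha\in U\}|=\mu$, so $x$ is a $\mu$-complete accumulation point. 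If $|Y|<\mu$, then by regularity of $\mu$ some single value $y$ is taken on a set of $\mu$ indices, and $y$ itself is trivially a $\mu$-complete accumulation point of the sequence. For the converse in (1), given $Y$ with $|Y|=\mu$, enumerate $Y$ as a sequence $(x_\alpha)_{\alpha\in\mu}$ without repetitions; a $\mu$-complete accumulation point of this sequence is exactly a complete accumulation point of $Y$.

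For (2), the forward direction splits the conclusion: if $X$ is $\mu$-$^*$compact then it is $\mu$-compact (take any $Y$ with $|Y|=\mu$, enumerate it injectively, apply $\mu$-$^*$compactness, and read off a complete accumulation point as in (1)), and it is also $\cf\mu$-compact — here, given $Z$ with $|Z|=\cf\mu$, I would stretch $Z$ into a $\mu$-indexed sequence by choosing a surjection $g\colon\mu\to\cf\mu$ with all fibers of size $\mu$ (possible since $\cf\mu$ is regular and $\cf\mu\le\mu$), set $x_\alpha=z_{g(\alpha)}$ for an enumeration $(z_i)_{i\in\cf\mu}$ of $Z$, and check that a $\mu$-complete accumulation point of $(x_\alpha)$ is a complete accumulation point of $Z$. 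The reverse direction of (2) is the substantive part: assuming $X$ is both $\mu$-compact and $\cf\mu$-compact, take any sequence $(x_\alpha)_{\alpha\in\mu}$ and let $Y$ be its range. If $|Y|=\mu$, apply $\mu$-compactness and then the fiber-counting argument; but now $\mu$ may be singular, so a union of $<\mu$ fibers of size $<\mu$ need not be small. To handle this I would fix an increasing cofinal sequence $(\mu_\beta)_{\beta\in\cf\mu}$ in $\mu$ and argue that if no value is taken $\mu$ times, then for each neighborhood $U$ of the complete accumulation point $x$ the set $\{\alpha\mid x_\alpha\in U\}$, having size $\mu$, meets the indices hitting $\mu_\beta$-many distinct values of $Y$ for unboundedly many $\beta$; and if $|Y|<\mu$ or some value is taken $\mu$ times, I would pass instead to $\cf\mu$-compactness applied to a suitably chosen cofinal-in-$\mu$ subfamily of the fibers.

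I expect the main obstacle to be exactly this last case analysis in the reverse direction of (2): cleanly organizing the dichotomy ``either some single point absorbs $\mu$ indices (reduce to a trivial sub-case), or the values spread out cofinally and one must combine a complete accumulation point of the range with $\cf\mu$-compactness applied to a system of fibers indexed by a cofinal sequence.'' The bookkeeping is routine once set up correctly, but choosing the right intermediate family so that both the $\mu$-compactness witness and the $\cf\mu$-compactness witness can be merged into a single $\mu$-complete accumulation point is the delicate point. Since the proposition is cited to \cite[VI, Proposition 1]{nuotop} and \cite[Proposition 3.3]{tproc2} as a special case, I would in practice quote those; but the self-contained argument above follows the pattern of the proof of Theorem \ref{propert}(1), using a cofinal sequence in the singular cardinal.
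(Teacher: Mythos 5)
The paper itself gives no proof of this proposition, only the citations to \cite{nuotop} and \cite{tproc2}, so your self-contained argument must stand on its own; it contains one genuine error and one acknowledged incompleteness. The error is in the forward direction of (2), where you derive $\cf \mu$-compactness from $\mu$-$^*$compactness by stretching $Z$ along a surjection $g\colon\mu\to\cf\mu$ \emph{all of whose fibers have size $\mu$}. With that choice the final ``check'' fails: for a neighborhood $U$ of a $\mu$-complete accumulation point $x$ of the stretched sequence, the index set $\{\alpha\mid z_{g(\alpha)}\in U\}$ equals $g^{-1}(\{i\mid z_i\in U\})$, which already has cardinality $\mu$ as soon as $Z\cap U\neq\emptyset$; so the hypothesis only yields $x\in\overline{Z}$, not $|Z\cap U|=\cf\mu$. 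The repair is the opposite kind of surjection, namely the one used in the proof of Theorem \ref{propert}(1): fix an increasing cofinal sequence $(\mu_\beta)_{\beta\in\cf\mu}$ and let $g$ have interval-like fibers of size $<\mu$, so that $|g^{-1}(S)|=\mu$ exactly when $S$ is unbounded in $\cf\mu$, i.e.\ (by regularity of $\cf\mu$) when $|S|=\cf\mu$.

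The converse direction of (2) is left as a sketch, and the case split you propose is not the right one. If some fiber of the sequence has $\mu$ indices you are done trivially. If all fibers have size $<\mu$ and the range $Y$ has size $\mu$, then a complete accumulation point of $Y$ (from $\mu$-compactness) is already a $\mu$-complete accumulation point, simply because the $\mu$ pairwise disjoint nonempty fibers over $Y\cap U$ contribute at least $\mu$ indices; no fiber counting and no regularity is needed, so the difficulty you anticipate in this case does not arise (the same remark simplifies your proof of (1)). The genuinely delicate case is $|Y|<\mu$ with all fibers of size $<\mu$, which forces $\mu$ singular: there one checks that each set $S_\beta=\{y\in Y\mid |f^{-1}(y)|\ge\mu_\beta\}$ has cardinality $\ge\cf\mu$, recursively picks \emph{distinct} $y_\beta\in S_\beta$, and applies $\cf\mu$-compactness to $\{y_\beta\mid\beta\in\cf\mu\}$; every neighborhood of the resulting accumulation point then contains $y_\beta$ for unboundedly many $\beta$, hence at least $\sup_\beta\mu_\beta=\mu$ indices. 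Part (1) of your argument is essentially correct.
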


\begin{theorem} \labbel{cpntop} 
Suppose that  each $\lambda_j$ is a regular cardinal, 
 endowed either with the order topology or with the
left order topology.
The space
$ \prod _{j \in J} \lambda_j $ is $\mu $-$^*$compact if and only if
$\mu \not\Rightarrow ( \lambda _ j ) _{ j \in J } $
\end{theorem}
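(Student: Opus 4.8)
The plan is to prove both directions by exploiting the equivalence between $\mu$-$^*$compactness and the behaviour of uniform ultrafilters over $\mu$, using Proposition~\ref{filt} as the bridge between the topological side and the combinatorial principle $\mu \Rightarrow ( \lambda _ j ) _{ j \in J }$. The key observation is that, for $\lambda$ a regular cardinal with the order topology (or the left order topology), the points of $\lambda$ that fail to be $\mu$-complete accumulation points of a given $\mu$-sequence are governed by ``bounded'' sets: a point $\beta < \lambda$ is not a $\mu$-complete accumulation point of $(x_\alpha)_{\alpha \in \mu}$ exactly when some neighborhood of $\beta$ catches fewer than $\mu$ of the $x_\alpha$'s, and by regularity of $\lambda$ one can arrange this neighborhood to be an interval $[0,\gamma)$ or $(\delta,\gamma)$, hence (after shrinking) a set of size $<\lambda$. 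So the sequence has \emph{no} $\mu$-complete accumulation point iff $\lambda$ can be covered by $<\lambda$-sized pieces each catching $<\mu$ of the sequence — and by regularity of $\lambda$ this is the same as: there is a single set $B \subseteq \lambda$, $|B| < \lambda$, with $|\{\alpha : x_\alpha \in B\}| < \mu$ failing, i.e. the tail ultrafilter pushed forward is not uniform.

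First I would fix a uniform ultrafilter $D$ over $\mu$ and, given a sequence $\vec{x} = (x_\alpha)_{\alpha\in\mu}$ in $\prod_{j\in J}\lambda_j$ with coordinate functions $f_j := \pi_j \circ \vec{x} : \mu \to \lambda_j$ (where $\pi_j$ is the $j$-th projection), show: $\vec{x}$ has a $\mu$-complete accumulation point in $\prod_{j\in J}\lambda_j$ iff there is a uniform ultrafilter $E \supseteq D$-like object — more precisely, I would show directly that $x \in \prod_j \lambda_j$ is a $\mu$-complete accumulation point of $\vec{x}$ iff for some uniform ultrafilter $D$ over $\mu$, each coordinate $x_j$ is the ``$D$-limit'' of $f_j$ in the order topology, which by regularity of $\lambda_j$ exists iff $f_j(D)$ is \emph{not} uniform over $\lambda_j$. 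Chaining these: $\prod_{j\in J}\lambda_j$ is $\mu$-$^*$compact iff for every $\mu$-sequence $\vec x$ there is a uniform ultrafilter $D$ over $\mu$ such that $f_j(D)$ is non-uniform over $\lambda_j$ for \emph{every} $j\in J$ — which is precisely the negation of $\mu \Rightarrow ( \lambda _ j ) _{ j \in J }$ once one checks that every sequence of functions $(f_j)_{j\in J}$ arises this way (it does: set $x_\alpha := (f_j(\alpha))_{j\in J}$).

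For the direction ``$\mu \not\Rightarrow ( \lambda _ j ) _{ j \in J }$ $\Rightarrow$ $\mu$-$^*$compact'' I would: take any $\mu$-sequence $\vec x$, read off the coordinate functions $f_j$, apply the failure of the principle to get a uniform $D$ with every $f_j(D)$ non-uniform, pick in each $\lambda_j$ a point $x_j$ witnessing non-uniformity (every $\le x_j$-initial-segment, or every small interval around $x_j$, has $f_j$-preimage outside $D$; here I use regularity of $\lambda_j$ and the structure of the (left) order topology to convert ``$f_j(D)$ non-uniform'' into ``$x_j$ is a $D$-accumulation point of $f_j$''), and verify $(x_j)_{j\in J}$ is a $\mu$-complete accumulation point using that a basic open neighborhood of $(x_j)_j$ constrains only finitely many coordinates and that a finite intersection of sets in $D$ is in $D$, hence has size $\mu$. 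For the converse I would contrapose: given witnessing functions $(f_j)_{j\in J}$ for $\mu \Rightarrow ( \lambda _ j ) _{ j \in J }$, form $\vec x$ and show it has no $\mu$-complete accumulation point — any candidate $(x_j)_j$ would, at the coordinate $j$ where $f_j(D)$ is uniform for the relevant $D$, fail, because no point of $\lambda_j$ can be a $D$-accumulation point of a function whose pushforward is uniform (every point has a small neighborhood, and smallness plus uniformity forces the preimage out of $D$; this is where regularity of $\lambda_j$ is essential, to pass from ``initial segments are not in the filter'' to ``some genuine neighborhood is not in the filter'').

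The main obstacle I expect is the careful handling of the two topologies simultaneously and of singular-vs-regular $\mu$ via Proposition~\ref{singreg}: in the left order topology the only neighborhoods are sets $[0,\gamma)$ and in the order topology they are intervals $(\delta,\gamma)$, so the ``small neighborhood'' argument must be phrased to cover both, and the reduction from complete accumulation points of \emph{sets} (for $\mu$-compactness) to $\mu$-complete accumulation points of \emph{sequences} needs the decomposition $\mu$-$^*$compact $\Leftrightarrow$ $\mu$-compact $+$ $\cf\mu$-compact, together with Theorem~\ref{propert}(1) which guarantees $\myarrow{\mu}{1}{\cf\mu}$ is relevant only in the singular case; I would isolate the key lemma ``for $\lambda$ regular and $D$ a uniform ultrafilter over $\mu$, $f : \mu \to \lambda$ has a $D$-accumulation point in the (left) order topology iff $f(D)$ is non-uniform over $\lambda$'' and prove it once, cleanly, before assembling the main equivalence.
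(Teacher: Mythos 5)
Your argument is correct. The paper itself states Theorem \ref{cpntop} without giving a proof (the relevant arguments live in the author's companion papers), so there is no line-by-line comparison to make; but the route you take --- isolating the lemma that for $\lambda$ regular a function $f:\mu\to\lambda$ has a $D$-limit in the (left) order topology if and only if $f(D)$ is not uniform over $\lambda$, and then identifying $\mu$-complete accumulation points of a sequence $(x_\alpha)_{\alpha\in\mu}$ with uniform ultrafilters $D$ over $\mu$ (extending $F_\mu$ together with the traces of the neighborhoods of the candidate point) for which every coordinate pushforward $f_j(D)$ is non-uniform --- is exactly the standard ultrafilter-convergence argument this equivalence is built on, and all the steps check out. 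Two small corrections of emphasis: regularity of $\lambda_j$ is needed only in the direction ``$f_j(D)$ non-uniform implies a $D$-limit exists'' (to replace a set of size $<\lambda_j$ belonging to $f_j(D)$ by a bounded initial segment, whose least upper bound in the filter then serves as the limit); the converse needs only that every point of $\lambda_j$ has a neighborhood of cardinality $<\lambda_j$, which holds for any cardinal. Also, your worry about Proposition \ref{singreg} and singular $\mu$ is unnecessary for the theorem as stated, since $\mu$-$^*$compactness is already defined via sequences; that reduction matters only for Corollary \ref{cpntopcor}.
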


We are going to show that,
under some weak and natural hypotheses,
and as far as products of cardinals are concerned,
$\mu $-$^*$compactness and $\mu $-compactness
are equivalent. Of course, for $\mu $ regular, the next corollary reduces to 
Theorem \ref{cpntop}.

\begin{corollary} \labbel{cpntopcor} 
Suppose that  $(\lambda_j) _{j \in J} $ is a sequence of regular cardinals,
and that each cardinal occurring in the sequence occurs infinitely many times, that is,
$|\{ h \in J \mid \lambda_h = \lambda _j  \}| \geq \omega $, for every $j \in J$. 
 If each $\lambda_j$ is
 endowed either with the order topology or with the
left order topology, then
$ \prod _{j \in J} \lambda_j $ is $\mu $-compact if and only if
either (a)
$ |\prod _{j \in J} \lambda_j| < \mu $, or (b)
$ |\prod _{j \in J} \lambda_j| \geq \mu $ and
$\mu \not\Rightarrow ( \lambda _ j ) _{ j \in J } $.

In particular, if 
$ |\prod _{j \in J} \lambda_j| \geq \mu $,
then
$ \prod _{j \in J} \lambda_j$
is $\mu $-compact if and only if it is
$\mu $-$^*$compact.
\end{corollary}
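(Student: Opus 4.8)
The plan is to derive Corollary~\ref{cpntopcor} from Theorem~\ref{cpntop} by controlling the passage between $\mu$-compactness and $\mu$-$^*$compactness via Proposition~\ref{singreg}(2). The only genuine content beyond Theorem~\ref{cpntop} concerns the singular case, and the role of the hypothesis that every cardinal occurs infinitely often is precisely to make the extra $\cf\mu$-compactness requirement in Proposition~\ref{singreg}(2) automatic.

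\smallskip

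\noindent\emph{Step 1 (the trivial direction).} If $|\prod_{j\in J}\lambda_j|<\mu$, then the product is vacuously $\mu$-compact, since it has no subset of cardinality $\mu$ at all; this is case~(a). So from now on assume $|\prod_{j\in J}\lambda_j|\geq\mu$ and we must show $\prod_{j\in J}\lambda_j$ is $\mu$-compact iff $\mu\not\Rightarrow(\lambda_j)_{j\in J}$.

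\smallskip

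\noindent\emph{Step 2 (reduce $\mu$-compactness to $\mu$-$^*$compactness).} By Proposition~\ref{singreg}(2), $\prod_{j\in J}\lambda_j$ is $\mu$-$^*$compact iff it is both $\mu$-compact and $\cf\mu$-compact. By Theorem~\ref{cpntop}, $\mu$-$^*$compactness of the product is equivalent to $\mu\not\Rightarrow(\lambda_j)_{j\in J}$. Hence it suffices to prove that, under the present hypotheses and the standing assumption $|\prod_{j\in J}\lambda_j|\geq\mu$, the product is automatically $\cf\mu$-compact; for then $\mu$-compact $\Leftrightarrow$ $\mu$-$^*$compact $\Leftrightarrow$ $\mu\not\Rightarrow(\lambda_j)_{j\in J}$, which yields both displayed claims at once. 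If $\mu$ is regular this is immediate ($\cf\mu=\mu$ and Proposition~\ref{singreg}(1) collapses the two notions, so nothing is to prove), so the heart of the matter is the singular case.

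\smallskip

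\noindent\emph{Step 3 (the product is $\cf\mu$-compact: the main point).} Let $\nu=\cf\mu<\mu$. We want $\prod_{j\in J}\lambda_j$ to be $\nu$-compact. Since $|\prod_{j\in J}\lambda_j|\geq\mu>\nu$ and each $\lambda_j\leq\mu$ (we may assume this, as the relevant arrows are only interesting then; formally one splits off the factors with $\lambda_j>\mu$, but for a product of cardinals bounded by $\mu$ with $\nu=\cf\mu$ one argues directly), the hypothesis that each cardinal value is attained infinitely often forces $J$ to be infinite, and in fact forces enough repetition that $\nu\not\Rightarrow(\lambda_j)_{j\in J}$ holds. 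Indeed, by Theorem~\ref{cpntop} applied with $\nu$ in place of $\mu$, the product is $\nu$-$^*$compact (equivalently, since $\nu$ is regular, $\nu$-compact) iff $\nu\not\Rightarrow(\lambda_j)_{j\in J}$; and $\nu\not\Rightarrow(\lambda_j)_{j\in J}$ is immediate because no witnessing sequence of functions can make, for \emph{every} uniform ultrafilter $D$ over $\nu$, some $f_j(D)$ uniform over $\lambda_j$ --- here one uses that each value is repeated infinitely often together with $\myarrow{\nu}{1}{\cf\nu}$ from Theorem~\ref{propert}(1) and the transitivity of Proposition~\ref{trans}, or more directly the observation that a single function $f_j:\nu\to\lambda_j$ with $\lambda_j<\nu$ is $\nu$-to-one on a set in $F_\nu$ for a suitably chosen uniform ultrafilter, so $f_j(D)$ fails to be uniform. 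The clean way is: every uniform ultrafilter over $\nu$ is $\cf\nu$-decomposable but we need it \emph{not} $\lambda_j$-decomposable for all $j$ simultaneously, which fails precisely because the supply of functions is insufficient; this is exactly where the hypothesis is spent.

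\smallskip

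\noindent I expect Step~3 to be the only real obstacle: one must phrase correctly why the ``infinitely often'' hypothesis guarantees $\cf\mu$-compactness of the product without circularity, and in particular handle factors $\lambda_j=\nu$ or $\lambda_j>\nu$ among the $\lambda_j$ (for $\lambda_j\geq\nu$ the factor $\lambda_j$ alone is already $\nu$-compact when $\nu$ is regular, by an easy direct argument or again by Theorem~\ref{cpntop}, and $\nu$-compactness is preserved under products here because $\nu$ is regular and the relevant arrow relation is monotone in $J$ by the supersequence remark after Definition~\ref{def}). Once $\cf\mu$-compactness is in hand, everything else is bookkeeping with Proposition~\ref{singreg} and Theorem~\ref{cpntop}.
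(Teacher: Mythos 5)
Your Steps 1 and 2 are fine as far as they go, and for regular $\mu$ the corollary does collapse to Theorem \ref{cpntop} plus Proposition \ref{singreg}(1), as you note. The gap is Step 3: the product is \emph{not} automatically $\cf\mu$-compact, so the route ``$\mu$-compact $\Leftrightarrow$ $\mu$-$^*$compact because $\cf\mu$-compactness comes for free'' breaks down exactly where the corollary has content, namely for singular $\mu$. Concretely, write $\nu=\cf\mu$ and suppose $\nu$ occurs among the $\lambda_j$ (the hypotheses allow this): then the identity map $\nu\to\nu$ already witnesses $\nu\Rightarrow(\lambda_j)_{j\in J}$, so by Theorem \ref{cpntop} the product is not $\nu$-$^*$compact, hence not $\nu$-compact. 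Equivalently, the factor $\nu$ with the order topology is itself not $\nu$-compact (the subset $\nu$ has no complete accumulation point, since every $\alpha$ has the neighborhood $[0,\alpha]$ of size $<\nu$), and $\nu$-compactness passes to factors; this also refutes your parenthetical claim that a factor with $\lambda_j=\nu$ is $\nu$-compact. Your fallback ``$\nu$-compactness is preserved under products here'' is not available either: $\omega^{\omega_1}$ is not $\omega_1$-compact by Theorem \ref{propert}(2) and Theorem \ref{cpntop}, although each factor is vacuously $\omega_1$-compact. What your strategy actually requires is the \emph{conditional} statement that $\mu$-compactness of the product implies its $\cf\mu$-compactness, and the proposal contains no argument for that.

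The ``infinitely often'' hypothesis is spent elsewhere, and this is the idea missing from your proposal. The paper proves the hard direction by upgrading $\mu$-compactness to $\mu$-$^*$compactness directly: since every value recurs infinitely often, $\prod_{j}\lambda_j$ is homeomorphic to its own square $Y=\prod_{j}\lambda_j\times\prod_{j}\lambda_j$; given a sequence $(x_\alpha)_{\alpha\in\mu}$, possibly with repetitions, one chooses $\mu$ pairwise distinct points $y_\alpha$ (this is where $|\prod_j\lambda_j|\geq\mu$ is used), so that $\{(x_\alpha,y_\alpha)\mid\alpha\in\mu\}$ is a genuine subset of $Y$ of cardinality $\mu$; a complete accumulation point $(x,y)$ of this set projects to a $\mu$-complete accumulation point $x$ of the original sequence. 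Thus $\mu$-compactness gives $\mu$-$^*$compactness, and Theorem \ref{cpntop} finishes the proof; no claim about $\cf\mu$-compactness of the product is needed, nor is one true.
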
  

\begin{proof}
If (a) holds then 
$ \prod _{j \in J} \lambda_j$
is vacuously  $\mu $-compact.

If (b) holds then
$ \prod _{j \in J} \lambda_j$
is $\mu $-$^*$compact
by Theorem \ref{cpntop}, hence 
  $\mu $-compact by Proposition \ref{singreg}.

Conversely,  suppose that
$ \prod _{j \in J} \lambda_j$ is
  $\mu $-compact
and 
$ |\prod _{j \in J} \lambda_j| \geq \mu $.
Since each $\lambda_j$ occurs infinitely many times in the sequence, 
then
$ \prod _{j \in J} \lambda_j$ is
homeomorphic 
to 
$Y = \prod _{j \in J} \lambda_j \times \prod _{j \in J} \lambda_j$,
in particular, $Y$ is   $\mu $-compact.
Let $(x_ \alpha ) _{ \alpha \in \mu} $
be a sequence of elements  of
$ \prod _{j \in J} \lambda_j$, and let
$(y_ \alpha ) _{ \alpha \in \mu} $
be a sequence of \emph{distinct}
elements of 
$ \prod _{j \in J} \lambda_j$.
Such a sequence exists since 
$| \prod _{j \in J} \lambda_j| \geq \mu$.  
 Then all elements
of the sequence
$(x_ \alpha , y_ \alpha )_{ \alpha \in \mu} $
in $Y$ are distinct.
By  $\mu $-compactness of $Y$,
the sequence has an accumulation point, 
say $(x,y)$.
Then $x$ is a $\lambda$-accumulation point of
$(x_ \alpha ) _{ \alpha \in \mu} $ in
 (the first copy of) $ \prod _{j \in J} \lambda_j$. In conclusion,
$ \prod _{j \in J} \lambda_j$ is 
$\mu $-$^*$compact, hence
$\mu \not\Rightarrow ( \lambda _ j ) _{ j \in J } $
by Theorem \ref{cpntop}.
\end{proof}

\begin{corollary} \labbel{topcpnL}
Suppose that $ \Lambda$ is a set of regular cardinals and $\kappa$ is infinite.
Then 
 $ \myarrownot {\mu}{\kappa}{\Lambda} $ 
if and only if 
$ \prod _{ \lambda \in \Lambda}  \lambda ^ \kappa $ is 
 $\mu $-$^*$compact.
If, in addition,
$ |\prod _{ \lambda \in \Lambda}  \lambda ^ \kappa | \geq \mu$ 
then the above conditions hold if and only if 
$ \prod _{ \lambda \in \Lambda}  \lambda ^ \kappa $ is 
 $\mu $-compact.
 \end{corollary}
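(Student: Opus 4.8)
The plan is to deduce the statement from Theorem~\ref{cpntop} and Corollary~\ref{cpntopcor} by unwinding the definition of $\myarrow {\mu}{\kappa}{\Lambda}$ and performing an elementary rearrangement of factors. Writing $\Lambda = \{ \lambda _h \mid h \in H \}$ with the $\lambda_h$ pairwise distinct and setting $J = H \times \kappa$, $\lambda _{(h, \gamma )} = \lambda _h$, Definition~\ref{defL} says that $ \myarrownot {\mu}{\kappa}{\Lambda} $ is literally $\mu \not\Rightarrow ( \lambda _ j ) _{ j \in J } $. The basic observation is that grouping together, for each $h \in H$, the $\kappa$ many factors indexed by $\{ h \} \times \kappa$ yields a canonical homeomorphism $\prod _{j \in J} \lambda_j \cong \prod _{ \lambda \in \Lambda }  \lambda ^ \kappa $; since both $\mu $-compactness and $\mu $-$^*$compactness are defined purely topologically, they transfer across this homeomorphism, so it suffices to work with $\prod _{j \in J} \lambda_j$.

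First I would record that the hypotheses of Theorem~\ref{cpntop} are met: each $\lambda_j$ is one of the regular cardinals in $\Lambda$, and $J \neq \emptyset $ because $\Lambda \neq \emptyset $ and $\kappa \geq \omega $; moreover each $\lambda_j$ is endowed with the order (resp.\ left order) topology exactly as in that hypothesis. Theorem~\ref{cpntop} then gives at once that $\prod _{j \in J} \lambda_j$ is $\mu $-$^*$compact if and only if $\mu \not\Rightarrow ( \lambda _ j ) _{ j \in J } $, i.e.\ if and only if $ \myarrownot {\mu}{\kappa}{\Lambda} $. Transporting back along the homeomorphism gives the first equivalence.

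For the second assertion I would invoke Corollary~\ref{cpntopcor} applied to the sequence $( \lambda _ j ) _{ j \in J }$. Its one nontrivial hypothesis is that every cardinal occurring in the sequence occurs infinitely often; this is exactly where the assumption $\kappa \geq \omega $ is used, since each $\lambda \in \Lambda$ occurs precisely $\kappa$ times. Under the extra assumption $| \prod _{ \lambda \in \Lambda }  \lambda ^ \kappa | \geq \mu $, equivalently $| \prod _{j \in J} \lambda_j| \geq \mu $, alternative (a) of Corollary~\ref{cpntopcor} is ruled out, so its ``in particular'' clause gives that $\prod _{j \in J} \lambda_j$ is $\mu $-compact if and only if it is $\mu $-$^*$compact; combined with the first part (and the homeomorphism), this closes the argument. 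I expect no genuine obstacle here beyond bookkeeping: the only points requiring real care are the justification of the factor-rearrangement homeomorphism (and that the two compactness notions are topological invariants, hence preserved by it) and the verification of the ``infinitely often'' hypothesis of Corollary~\ref{cpntopcor}, which is precisely what forces the restriction to infinite $\kappa$.
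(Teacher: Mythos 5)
Your proposal is correct and is exactly the intended derivation: the paper states Corollary \ref{topcpnL} without proof, as an immediate consequence of Theorem \ref{cpntop} and Corollary \ref{cpntopcor} applied to the sequence indexed by $J = H \times \kappa$, together with the evident regrouping homeomorphism $\prod_{j \in J}\lambda_j \cong \prod_{\lambda \in \Lambda}\lambda^\kappa$. Your identification of where $\kappa \geq \omega$ is used (the ``each cardinal occurs infinitely often'' hypothesis of Corollary \ref{cpntopcor}) is the right observation.
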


\section{Model-theoretical equivalents} \labbel{mt} 

The main idea from \cite{bumi} 
for applying ``classical''  model theory to extended  logics 
was the introduction of the notion
of a $\mu $-nonstandard element
(said to \emph{bound} $\mu $ in \cite{bumi})
in models with a fixed order of type $\mu $---without
 loss of generality we can take this order to 
be $\mu $ itself.
The principle corresponding to
$ \myarrow {\mu}{\kappa}{\lambda} $,
for $\mu $, $\lambda$ regular, 
was then introduced there (with the same notation
when $\kappa> \mu$ and with $\kappa$ omitted when $\kappa= \mu $)
by asserting that every model with
a $\mu $-nonstandard element has some
 $\lambda$-nonstandard element,
modulo a theory of cardinality $\kappa$.

In details, let
$\m A = \langle \mu, <, \alpha , \dots  \rangle _{ \alpha \in \mu } $.
If $\m B$ is \emph{elementarily equivalent}
to $\m A$, in symbols 
 $\m B \equiv \m A$, an element  $b $ of $  B$ is
 \emph{$\mu$-nonstandard} 
if $ \alpha < b$ holds in $\m B$, for every $\alpha \in \mu $.
Of course, in  case $\mu= \omega $
we get the usual notion of a nonstandard element.
Compare also  \cite[pp. 116--118]{cha67}.
Similarly, for $\lambda < \mu $, an element 
$c $ of $ B$ is \emph{$\lambda$-nonstandard} 
if $  c < \lambda $ and $\beta < c $ hold  in $\m B$, for every $\beta \in \lambda$.

\begin{thm} \labbel{nonstL}
If $\mu $ is regular,
$\Lambda$ is a set of regular cardinals
and $ \kappa \geq \mu \geq \sup \Lambda  $,  then 
$ \myarrow {\mu}{\kappa}{\Lambda} $ 
if and only if there is an
 expansion $\m A$ of 
$\langle \mu, <, \alpha \rangle _{ \alpha \in \mu } $
with at most $\kappa$ new symbols
(equivalently, symbols and sorts) 
such that for every 
$\m B \equiv \m A$, 
if $\m B$ has a $\mu$-nonstandard element
then $\m B$ has a $\lambda$-nonstandard element,
for some $ \lambda \in \Lambda$.
 \end{thm}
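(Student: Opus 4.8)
The plan is to run both directions through ultrapowers and Skolem hulls, the standard bridge between decomposability of ultrafilters and $\mu$-nonstandard elements. Throughout I may assume $\mu\notin\Lambda$ (the case $\mu\in\Lambda$ being degenerate), so that every $\lambda\in\Lambda$ is $<\mu$ and hence named by a constant of $\langle\mu,<,\alpha\rangle_{\alpha\in\mu}$; by Fact \ref{fact0}(2) I may also assume that in a witnessing sequence $(f_j)_{j\in J}$ for $\myarrow{\mu}{\kappa}{\Lambda}$ each $f_j$ maps $\mu$ into some $\lambda_j\in\Lambda$ with $\lambda_j<\mu$, and note $|J|=\kappa\cdot|\Lambda|=\kappa$ since $1\le|\Lambda|\le\mu\le\kappa$.

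For the implication from $\myarrow{\mu}{\kappa}{\Lambda}$ to the model-theoretic statement, fix such a witnessing sequence and let $\m A=\langle\mu,<,\alpha,f_j\rangle_{\alpha\in\mu,\,j\in J}$, an expansion with the $\kappa$ new unary function symbols $f_j$. I claim this $\m A$ works. Suppose toward a contradiction that some $\m B\equiv\m A$ has a $\mu$-nonstandard element $b$ but no $\lambda$-nonstandard element for any $\lambda\in\Lambda$. Since $f_j$ has range contained in $\lambda_j$ we get $\m B\models f_j(b)<\lambda_j$, and since $f_j(b)$ is not $\lambda_j$-nonstandard there is $\beta_j<\lambda_j$ with $\m B\models f_j(b)\le\beta_j$; here $\beta_j<\lambda_j<\mu$ is a constant of $\m A$. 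Put $B_j=\{\gamma<\lambda_j : \gamma\le\beta_j\}$, so $B_j\subseteq\lambda_j$ and $|B_j|<\lambda_j$. The heart of the argument is that $\{f_j^{-1}(B_j)\mid j\in J\}\cup F_\mu$ has the finite intersection property: for every finite $N\subseteq J$ and every $\alpha<\mu$ the sentence $\exists x\,(\alpha<x\wedge\bigwedge_{j\in N}f_j(x)\le\beta_j)$ is true in $\m B$ (witnessed by $b$), hence true in $\m A$, so $\bigcap_{j\in N}f_j^{-1}(B_j)$ is cofinal in $\mu$; as $\mu$ is regular every member of $F_\mu$ has bounded complement, and a cofinal set meets every such set. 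Extending the family to an ultrafilter $D\supseteq F_\mu$ yields a uniform ultrafilter over $\mu$ with $B_j\in f_j(D)$ and $|B_j|<\lambda_j$ for all $j$, so no $f_j(D)$ is uniform over $\lambda_j$, contradicting that the $f_j$'s witness $\myarrow{\mu}{\kappa}{\Lambda}$.

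For the converse, let $\m A$ be an expansion as in the statement. First replace $\m A$ by its Skolemization: this adds at most $\kappa$ further symbols (as $\kappa$ is infinite), and the new structure still has the required property, since any model elementarily equivalent to it with a $\mu$-nonstandard element has a reduct $\equiv\m A$ with the same element, hence a $\lambda$-nonstandard element, which stays $\lambda$-nonstandard after adding the Skolem functions. So assume $\m A$ has definable Skolem functions. Given a uniform ultrafilter $D$ over $\mu$, form $\m B=\m A^\mu/D$; by the fundamental theorem on ultraproducts $\m B\equiv\m A$, and $b=[\mathrm{id}]_D$ is $\mu$-nonstandard precisely because $D$ is uniform. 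The Skolem hull $\m C$ of $\{b\}$ together with all the constants is an elementary submodel of $\m B$, satisfies $\m C\equiv\m A$, and contains the $\mu$-nonstandard element $b$, so by hypothesis it contains a $\lambda$-nonstandard element $c$ for some $\lambda\in\Lambda$. Since $\m C$ is generated over the constants by $b$ alone, $c=\tau^{\m B}(b)=[\tau^{\m A}]_D$ for a unary term $\tau$ of the language of $\m A$ (absorbing the constants). Unwinding "$c$ is $\lambda$-nonstandard" in the ultrapower gives $\{\xi<\mu : \tau^{\m A}(\xi)<\lambda\}\in D$ and $\{\xi : \tau^{\m A}(\xi)\le\beta\}\notin D$ for every $\beta<\lambda$; letting $g_\tau\colon\mu\to\lambda$ agree with $\tau^{\m A}$ where the latter is below $\lambda$ and vanish elsewhere, we get $g_\tau^{-1}([0,\beta])\notin D$ for all $\beta<\lambda$, and since $\lambda$ is regular every subset of $\lambda$ of size $<\lambda$ lies in some $[0,\beta]$, so $g_\tau(D)$ is uniform over $\lambda$.

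It remains to assemble the witnessing sequence. Enumerate the at most $\kappa\cdot\mu=\kappa$ pairs $(g_\tau,\lambda)$ with $\tau$ a unary term of the (Skolemized) language of $\m A$, $\lambda\in\Lambda$, and $g_\tau$ the $\lambda$-truncation above; the previous paragraph shows that for every uniform ultrafilter $D$ over $\mu$ one of these $g_\tau$ has $g_\tau(D)$ uniform over its target. Padding, for each $\lambda\in\Lambda$, with constant ``junk'' functions $\mu\to\lambda$ until each target occurs exactly $\kappa$ times — which only passes to a supersequence and so preserves the witnessing property — gives $\myarrow{\mu}{\kappa}{\Lambda}$. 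I expect the main obstacle to be the finite-intersection verification in the forward direction together with the bookkeeping of which ordinals are named by constants of $\m A$; the converse is essentially routine once one observes that Skolemizing $\m A$ is harmless and forces the $\lambda$-nonstandard witness $c$ to be term-definable from $b$.
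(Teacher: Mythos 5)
Your proof is correct. The paper itself gives no argument for Theorem \ref{nonstL} beyond a citation of \cite{cpnw} and \cite{nuotop}, but what you write is precisely the standard argument those references carry out: in one direction you expand by the witnessing functions and convert the absence of $\lambda$-nonstandard elements into a bounded-value family with the finite intersection property (using regularity of $\mu$), and in the other you take the Skolem hull of the $\mu$-nonstandard element $[\mathrm{id}]_D$ in the ultrapower so that the $\lambda$-nonstandard witness is term-definable, which is the key step that yields a single family of $\kappa$ functions working for all uniform $D$ (with regularity of $\lambda$ used to pass from ``no $[0,\beta]$ in $g_\tau(D)$'' to uniformity).
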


\begin{proof}
The proof is an immediate generaliztion of 
\cite[Theorem 4]{cpnw}.
Full details appeared in 
\cite{nuotop}. 
\end{proof}

There are possible generalizations of Theorem \ref{nonstL} 
to singular cardinals, but they are involved, quite technical
and, so far, appear to be of little practical use.
We address the interested reader to \cite{nuotop}.

\section{The trace of Kat{\v{e}}tov order} \labbel{kat}

\begin{definition} \labbel{defkat}    
Suppose that $F$ is a filter over $I$ 
and, for each $j \in J$, $G_j$ is a filter over $H_j$. 
We write
$F \Rightarrow ( G_ j ) _{ j \in J } $ 
in case the following statement holds.
 \begin{enumerate}[(1)]   
\item[]
There is a sequence  
$(f_ j ) _{ j \in J }$
of functions 
such that  $f_ j  :I   \to H _ j  $
for $j \in J$, and such that 
for every  ultrafilter $D$ over $I $
containing $F$  
there is $j \in J$ such that 
$f_ j (D)$ contains $ G _ j$.    
 \end{enumerate}

More generally, 
if $\mathcal K \subseteq \mathcal P(J)$,
we let 
$F \Rightarrow_{\mathcal K} ( G_ j ) _{ j \in J }$ 
mean the following.
 \begin{enumerate}[(1)]   
\item[]
There is a sequence  
$(f_ j ) _{ j \in J }$
of functions 
such that  $f_ j  :I   \to H _ j  $
for $j \in J$, and such that
$\{ j \in J \mid  f_ j (D) \supseteq  G _ j \} \in \mathcal K $ 
for every  ultrafilter $D$ over $I $
containing $F$.    
 \end{enumerate}

Thus 
$F \Rightarrow ( G_ j ) _{ j \in J } $ 
is
$F \Rightarrow_{\mathcal K} ( G_ j ) _{ j \in J } $ 
when 
$\mathcal K $ is
the set of all one-element subsets of $J$. 
As in Definition \ref{def}, 
$ \myarrow {F}{\kappa}{G} $ 
denotes the case when 
$|J |= \kappa $ and the $ G _ j $'s  are all equal to $G $.
The 
negations of the principles are denoted by
$F \not\Rightarrow ( G_ j ) _{ j \in J } $
and
$ \myarrownot {F}{\kappa}{G} $,
everything possibly with
the subscript ${\mathcal K}$  added.

We can also consider variants of the above definitions 
in which only $\kappa$-complete ultrafilters $D$ are taken into account;
we shall denote such modified notions  as
$F \Rightarrow ( G_ j ) _{ j \in J }\ (\kappa\text{-complete})$,
$ F \Rightarrow^ \kappa _{\mathcal K}  G \ (\kappa\text{-complete})$
 and  so on.
\end{definition}

\bibliographystyle {asl}

\def\cprime{$'$} \def\cprime{$'$} \def\cprime{$'$}

\end{document}